\newcommand{\norm}[1]{\left\lVert#1\right\rVert}
\newcommand{\inner}[2]{\langle#1,#2\rangle}
\newcommand{\eu}{\mathbf{e}}
\theoremstyle{thmstyleone}%
\newtheorem{theorem}{Theorem}
\newtheorem{corollary}{Corollary}[theorem]
\newtheorem{lemma}[theorem]{Lemma}
\newtheorem{proposition}[theorem]{Proposition}%
\theoremstyle{thmstyletwo}%
\newtheorem{remark}{Remark}%
\theoremstyle{thmstylethree}%
\newtheorem{definition}{Definition}%
\begin{document}
\nocite{4321}
\title[Stability and long-time behaviour of a rigid body containing a damper]{Stability and long-time behaviour of a rigid body containing a damper}

\author[1]{\fnm{Evan} \sur{Arsenault}}\email{evan.arsenault@mail.utoronto.ca}
\equalcont{This authors contributed equally to this work.}
\affil[1]{\orgdiv{Edward S. Rogers Sr. Department of Electrical and Computer Engineering}, \orgname{University of Toronto}, \orgaddress{\street{10 King's College Road}, \city{Toronto}, \postcode{M5S 3G4}, \state{Ontario}, \country{Canada}}}

\author*[2]{\fnm{Giusy} \sur{Mazzone}}\email{giusy.mazzone@queensu.ca}
\affil[2]{\orgdiv{Department of Mathematics and Statistics}, \orgname{Queen's University}, \orgaddress{\street{48 University Avenue}, \city{Kingston}, \postcode{K7L 3N6}, \state{Ontario}, \country{Canada}}}

\abstract{
This paper provides a comprehensive analysis of stability and long-time behaviour of a coupled system constituted by two rigid bodies separated by a thin layer of lubricant. We show that permanent rotations of the whole system, with the solids at relative rest, are exponentially stable if and only if the axis of rotation is the principal axis of inertia corresponding to the largest moment of inertia of the outer body. All other equilibria are normally hyperbolic, and hence unstable. In addition, we show that all solutions to the governing equations converge to an equilibrium configuration, no matter which initial conditions are chosen. Numerical evidence of the above results as well as conditions ensuring attainability of the stable configurations are also presented. 

For the stability analysis we use a linearization principle for dynamical systems possessing a center manifold. The characterization of the long-time behaviour is obtained by a careful analysis of the partially dissipative system of equations governing the motion of the coupled system. 
}

\keywords{rigid body dynamics, free rotation, partially dissipative system, normally stable equilibrium, normally hyperbolic equilibrium, long-time behaviour, stabilization}

\pacs[MSC Classification]{70K25,70E17,70E50,34A34,34D05,34D20}

\maketitle
\subsection*{Declarations}
\subsubsection*{Funding}

Evan Arsenault gratefully acknowledges the support of the Natural Sciences and Engineering Research Council of Canada (NSERC) through the NSERC Undergraduate Student Research Award ``Long-time dynamics of rigid bodies with a damper''.

Giusy Mazzone gratefully acknowledges the support of the Natural Sciences and Engineering Research Council of Canada (NSERC) through the NSERC Discovery Grant ``Partially dissipative systems with applications to fluid-solid interaction problems''.

\subsubsection*{Conflicts of interest}
The authors have no relevant financial or non-financial interests to disclose.
\subsubsection*{Availability of data and material}
Not applicable. 

\subsubsection*{Code availability}
Not applicable. 

\subsubsection*{Author's contributions}
All authors contributed to the investigations here reported. The first draft of the manuscript was written by Evan Arsenault and all authors commented on previous versions of the manuscript. All authors read and approved the final manuscript.

\subsection*{Acknowledgements}
Not applicable. 

\pagebreak
\section{Introduction}

Consider a system constituted by two rigid bodies $\mathcal B_1$ and $\mathcal B_2$, with $\mathcal B_2$ (strictly) contained in a spherical cavity within $\mathcal B_1$. The solids are separated only by a thin  spherical layer of lubricant\footnote{The lubricant is a viscous incompressible fluid.} that completely fills the gap between the two solids (see Figure \ref{fig:damped_solid}). We further assume that the inner solid $\mathcal B_2$ is a homogeneous spherical rigid body with radius $a\gg h$, where $h$ denotes the (constant) thickness of the gap between $\mathcal B_2$ and the surface of the cavity within $\mathcal B_1$.
\begin{figure}[h]
\begin{center}
\psfrag{1}{$\mathcal B_1$}
\psfrag{2}{$\mathcal B_2$}
\psfrag{G}{$G$}
\psfrag{a}{$a$}
\psfrag{h}{\footnotesize $h$}
\includegraphics[width=.55\textwidth]{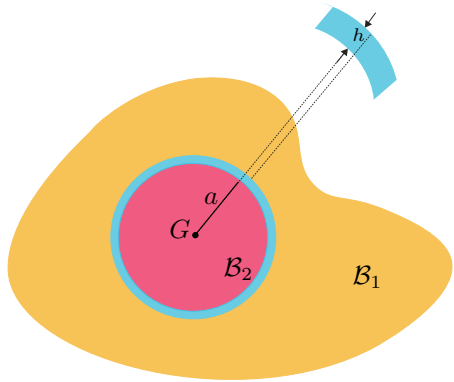}
\caption{A rigid body with a damper}\label{fig:damped_solid}
\end{center}
\end{figure}

In this paper, we consider the {\em free rotations} of the whole system about the center of mass $G$ of $\mathcal B_1$, i.e., we assume that there are no external forces acting on the system, and $G$ is a fixed point (see equation \eqref{outerangmom}). In addition, we ignore any translational motion of $\mathcal B_2$ within the cavity, and assume that the center\footnote{The geometrical center $C$ of $\mathcal B_2$ is also its center of mass since we assumed that $\mathcal B_2$ is a homogeneous rigid body.} $C$ of $\mathcal B_2$ coincides with $G$\footnote{Following \cite{evo}, this latter assumption can be replaced by assuming that the entire mass of $\mathcal B_2$ is concentrated at $C$.}. We further suppose that the surface force (which is an internal force, due to the presence of the lubricant) between the two solids produces a torque which is proportional to the velocity of $\mathcal B_2$ relative to $\mathcal B_1$ (see equation \eqref{innerangmom}). 

It is worth noticing that if there were no lubricant between the solids, then the motion of the two rigid bodies would be completely uncoupled (and thus, uninteresting). Furthermore, if there were no inner solid within the cavity, then the generic motion of the outer rigid body $\mathcal B_1$ would be a motion \`a la Poinsot. For example, depending on the initial conditions, $\mathcal B_1$ could perform a regular precession. Objective of this work is to show that the inner solid $\mathcal B_2$, subject to the type of torque described above (due to the lubricant), would {\em stabilize} the motion of the outer solid and take the whole system to a state (an equilibrium configuration, in fact, see Theorem \ref{equilibria}) that is the relative rest of the solids with the whole system spinning with constant angular velocity around one of the principal axes of inertia of $\mathcal B_1$. In this sense (also inspired by \cite{chern}), we refer to $\mathcal B_2$ as the ``damper'', and to the whole system as a ``rigid body with a spherical damper''. 

According to  \cite{chern} (see also \cite[Section 7.4]{evo} and references therein), M.~ A. Lavrentyev considered this problems as a lumped mass parameter system approximating the motion of a rigid body with a cavity filled with a viscous fluid. 
Chernous'ko (see, e.g., \cite{chern,evo}), obtained conditions on physical quantities (like fluid density and viscosity) under which equations \eqref{ODE}, describing the motions of a rigid body with a spherical damper, would as well describe the motion of a rigid body with a spherical cavity filled with an incompressible viscous fluid (at low Reynolds number). The motion of fluid-filled rigid bodies is studied in connection with geophysical applications in the search of sources for the geomagnetic field, in structural engineering for the design of tuned liquid dampers, as well as in space engineering for the stabilization of spacecrafts (see \cite{giu} and references therein).  

In \cite{chern,evo}, the authors derived necessary and sufficient conditions for the stability of permanent rotations\footnote{ Permament rotations are rigid body rotations occurring with constant angular velocity. } of the  system, as a whole rigid body, around one of the principal axis of inertia of the outer solid. Let us be more precise. Consider the equilibrium configuration which is characterized by $\mathcal B_2$ at relative rest with respect to $\mathcal B_1$, and the whole system of rigid body with a damper rotating with constant angular velocity $\Omega^*$ around the principal axes of inertia $\mathsf a$ of $\mathcal B_1$. Let $A$ be the corresponding principal moment of inertia of $\mathcal B_1$, and denote with $B$ and $C$ the remaining moments of inertia. Using a spectral stability argument, Chernous'ko et al. show that a necessary condition for the stability (in the sense of Lyapunov) of $\Omega^*$ is that $A\ge B, C$. In the same works, sufficient conditions are obtained using the classical Lyapunov method. It is then found that $\Omega^*$ is stable if $A>B,C$. These results are nonlinear stability results which are silent about the asymptotic stability properties of equilibria as well as the long-time behaviour of generic motions. For example, it is not known whether trajectories starting sufficiently close to $\Omega^*$ would eventually (as time goes to infinity) converge to an equilibrium, and if so, with what rate. 

In this paper, we show that the condition $A\ge B,C$ is a necessary and sufficient condition for the stability of $\Omega^*$ (see Theorem \ref{th:nonlinear_stability}), and we also prove that {\em all  trajectories corresponding to solutions of the equations of motion will converge to an equilibrium configuration at an exponential rate, no matter how one chooses the initial conditions} (see Theorem \ref{th:long-time}).  The stability results are obtained using a {\em principle of linearized stability} for autonomous systems of ordinary differential equations with normally stable and/or normally hyperbolic equilibria (see Definition \ref{def_norhyp}, and Theorems \ref{normalstable} \& \ref{normalhyper}).  A spectral analysis of the equations of motion \eqref{ODE} shows that equilibria are, indeed, either normally stable or normally hyperbolic (see Theorem \ref{pruss-theorem}). The long-time behaviour of generic  motions $z(t;z_0)$\footnote{ $z(t;z_0)=(\Omega(t),\Omega_1(t))^T$ is the (unique) solution to \eqref{ODE} corresponding to the initial data $z_0=(\Omega_0,\Omega_{10})^T\in \mathbb{R}^3\times\mathbb{R}^3$. } is obtained by showing that the corresponding $\omega$-limit set $\omega(z_0)$ is non-empty, compact and connected, it is made of either only normally stable equilibria or only normally hyperbolic ones, and 
\[
\lim_{t \rightarrow \infty} \text{d}(z(t;z_0),\omega(z_0))=0. 
\]
Convergence to an equilibrium point is then a consequence of the asymptotic stability properties of normally stable and normally hyperbolic equilibria according to  Theorems \ref{normalstable} \& \ref{normalhyper}. 

This paper also provides sufficient conditions on the attainability of the stable equilibrium configurations. In addition, we present some results concerning the numerical approximation of solutions to \eqref{ODE}. These results provide further insights on the stability conditions as well as on the attainability of equilibria. 

Beside their novelty, the results of this paper are non-trivial considering that the equations of motion are only {\em partially dissipative}. In fact, even if the total kinetic energy of the whole system decreases along the trajectories, the two solids must move while keeping the total angular momentum conserved at all times (see Proposition \ref{prop:balances}). In particular, neither the balance of kinetic energy \eqref{tderiv-kin}, nor the Lyapunov function constructed in \cite{chern,evo} could directly\footnote{Say, through a Gronwall-type lemma. } provide a decay with rate for the relative velocity of the solids. We show that an exponential decay of the solids relative velocity happens after ``some time'' the system has been moving, see Theorem \ref{th:long-time} and the numerical results in Figures \ref{fig-unstable} \& \ref{fig-attain}. Another point of interest is the way we prove the spectral stability properties of our equilibria which differs from that in  \cite{chern,evo}, and it has a more geometrical flavour as it shows how the spectrum of the linearization moves while changing the ordering of the moment of inertia of $\mathcal B_1$ (see proof of Theorem \ref{pruss-theorem}). We believe that such a complete and rigorous analytical treatment could be beneficial in the analysis of similar problems in rigid body dynamics involving the design of dampening mechanisms. 

Finally, this paper is in line with the recent work of the second author and collaborators concerning the stabilization of rigid bodies (\cite{DiGaMaZu,MaPrSi,MaPrSi19,giu2}). In particular, in \cite{giu2}, the second author of this paper has investigated the existence of solutions, and provided a preliminary analysis of the long-time behaviour of generic trajectories for the (more complete) fluid-solid interaction problem of rigid bodies separated by a gap filled by a viscous incompressible fluid (obeying the Navier-Stokes equations). In this respect, the current work could provide some insights for the study of stability and long-time behaviour of the physical system considered in \cite{giu2}. 

Here is the plan of the paper. We begin with Section \ref{sec:notation} containing some notation and the general stability theorems for autonomous systems that we will use for our physical problem. In Section \ref{sec:EOM}, we derive the equations governing the motion of a rigid body with a damper, and we provide preliminary properties of their solutions (like global existence, uniqueness, and energy balances). In Section \ref{sec:equilibria}, we provide a linear and nonlinear stability analysis of the equilibria. The long-time behaviour of generic trajectories is completely characterized in Section \ref{sec:long-time}. Numerical evidence of the above results is provided in Section \ref{sec:numerics}. 

\section{Notation and useful results}\label{sec:notation}

We use the symbol $\norm{\cdot}$ for the Euclidean norm in the $d$-dimensional Euclidean space $\mathbb{R}^d$, and denote with $\inner{\cdot}{\cdot}$ the standard dot product in $\mathbb{R}^d$. We recall that $\norm{u}=\sqrt{\inner{u}{u}}$ for every $u\in \mathbb{R}^d$.  In addition, for $r>0$ and $x_0\in \mathbb{R}^d$, $B_r(x_0)$ denotes the open ball centered at $x_0$ and with radius $r$, i.e., 
\[
B_r(x_0):=\{x\in \mathbb{R}^d:\; \norm{x-x_0}<r\}.
\]
Similarly,  
\[
\overline{B_r}(x_0):=\{x\in \mathbb{R}^d:\; \norm{x-x_0}\le r\}
\]
denotes the closed ball centered at $x_0$ and with radius $r$. 

In $\mathbb{C}^d$, we consider the inner product (over the field $\mathbb{C}$)\begin{multline*}
\inner{z}{w}_{\mathbb C^d}:=\inner{z}{\overline w}
\\
\quad=\inner{\mathsf{Re}(z)}{\mathsf{Re}(w)}+\inner{\mathsf{Im}(z)}{\mathsf{Im}(w)}+i\left(\inner{\mathsf{Im}(z)}{\mathsf{Re}(w)}-\inner{\mathsf{Re}(z)}{\mathsf{Im}(w)}\right),
\end{multline*}
and associated norm $\norm{z}_{\mathbb C^d}=\sqrt{\inner{z}{z}_{\mathbb C^d}}=\sqrt{(\mathsf{Re}(z))^2+(\mathsf{Im}(z))^2}$. In the above, we have denoted with $\mathsf{Re }(\cdot)$  and $\mathsf{Im }(\cdot)$ the real and imaginary part of a complex number, respectively. 

For a matrix $A\in \mathbb{R}^{d\times d}$, $\sigma(A)$ denotes the spectrum of $A$, i.e., 
\[
\sigma(A)=\{\lambda\in \mathbb{C}:\; \text{there exists }u\in \mathbb{R}^d\setminus\{0\}\text{ such that }Au=\lambda u\}.
\]
$A^T$ denotes the transpose of $A$. $\mathsf{SO}(3)$ denotes the special orthogonal group of all orthogonal matrices with determinant 1. We recall that for a skew-symmetric matrix $A\in \mathbb{R}^{d\times d}$ (i.e., $A^T=-A$), there exists a vector $\alpha\in \mathbb{R}^d$ (called {\em axial vector}) such that $Av=\alpha\times v$ for every $v\in \mathbb{R}^d$. 

This paper makes use of the following useful results, the first being a modified Gronwall lemma. 
\begin{lemma}
\label{gronwall}
Suppose that a function $y\in L^{\infty}(0,\infty), y \geq 0$, satisfies the following inequality for all $t \geq 0$:
\begin{equation*}
    \dot y(t) \leq -Cy(t) + F(t),
\end{equation*}
where $C > 0$ is a constant, and $F \in L^q(a,\infty)\cap L^1_{\text{loc}}(0,\infty)$, for some $a > 0$ and $q \in [1, \infty)$, satisfies $F(t) \geq 0$ for a. a. $t \geq 0$. Then:
\begin{equation*}
    \lim_{t \rightarrow \infty} y(t) = 0
\end{equation*}

If $F \equiv 0$, then:
\begin{equation*}
    y(t) \leq y(0)e^{-Ct},\quad \text{for all }t \geq 0.
\end{equation*}
\end{lemma}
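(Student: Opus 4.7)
The plan is to handle both claims through a single integrating factor argument. Setting $\phi(t) := y(t)\,e^{Ct}$ converts the hypothesis into $\dot\phi(t) \leq e^{Ct} F(t)$. Integrating on $[0,t]$ and dividing by $e^{Ct}$ gives the standard representation
\begin{equation*}
y(t) \leq y(0)\,e^{-Ct} + \int_0^t e^{-C(t-s)} F(s)\,ds.
\end{equation*}
When $F \equiv 0$ this is exactly the desired exponential bound, so the second assertion is immediate.

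For the convergence statement, the first term decays to $0$ trivially, so the task reduces to showing that the convolution $I(t) := \int_0^t e^{-C(t-s)} F(s)\,ds$ tends to $0$. The plan is to fix $\varepsilon > 0$ and, using that $F \in L^q(a,\infty)$ with $q<\infty$, pick $T \geq a$ with $\|F\|_{L^q(T,\infty)} < \varepsilon$ (by absolute continuity of the $L^q$-norm). Then I split $I(t) = \int_0^T + \int_T^t$. On $[0,T]$ the kernel is bounded by $e^{-C(t-T)}$, and $\int_0^T F(s)\,ds$ is finite thanks to $F \in L^1_{\mathrm{loc}}(0,\infty)$, so that piece vanishes as $t \to \infty$. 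On $[T,t]$, for $q > 1$ I would apply H\"older with conjugate exponent $p = q/(q-1)$, using $\int_T^t e^{-Cp(t-s)}\,ds \leq 1/(Cp)$ to dominate that piece by $(Cp)^{-1/p}\varepsilon$; for $q = 1$ the trivial pointwise bound $e^{-C(t-s)} \leq 1$ gives the piece $\leq \varepsilon$ directly. Taking $\limsup_{t\to\infty}$ and then letting $\varepsilon \to 0$ concludes $I(t)\to 0$.

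The main (minor) obstacle is the bookkeeping between the two regimes $q=1$ and $q>1$, since H\"older in the form above degenerates at $q=1$; treating $q=1$ separately by the trivial kernel bound is the cleanest workaround. I note that the hypothesis $y \in L^{\infty}(0,\infty)$ plays no essential role in the computation beyond ensuring that the differential inequality is globally meaningful along the trajectory.
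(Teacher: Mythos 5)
Your argument is correct and complete. Note that the paper does not actually prove this lemma itself --- it only cites an external reference for the proof --- so there is no internal argument to compare against; your proposal supplies a self-contained proof. The route you take (integrating factor giving the Duhamel-type bound $y(t)\le y(0)e^{-Ct}+\int_0^t e^{-C(t-s)}F(s)\,ds$, then splitting the convolution at a point $T$ where the $L^q$-tail of $F$ is below $\varepsilon$, handling $[0,T]$ by the uniform kernel bound $e^{-C(t-T)}$ and $[T,t]$ by H\"older for $q>1$ or the trivial kernel bound for $q=1$) is the standard way such generalized Gronwall lemmas are established, and each step checks out: the tail of the $L^q$-norm vanishes since $q<\infty$, $\int_T^t e^{-Cp(t-s)}\,ds\le 1/(Cp)$, and $y\ge 0$ lets you pass from the upper bound to $\lim_{t\to\infty}y(t)=0$; the $F\equiv 0$ case is immediate from the same representation. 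Two cosmetic remarks: read literally, $F\in L^1_{\mathrm{loc}}(0,\infty)$ on the \emph{open} interval does not guarantee $\int_0^T F<\infty$, so to be scrupulous one should start the integration at some $t_0>0$ (which changes nothing in the limit, since $y(t_0)$ is finite); and your observation that the $L^\infty$ hypothesis on $y$ is not really used is accurate for this proof --- it is the nonnegativity of $y$ and the differential inequality that do the work.
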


In the above $L^q(a,b)$, with $q\in [1,\infty]$, identifies the usual Lebesgue space of real-valued functions defined on the interval $(a,b)$, with $-\infty\le a<b\le +\infty$. A proof of the above lemma can be found in \cite{giu}. 

Next, we will also consider the functional space $C^1(D, \mathbb{R}^d)$ of all continuously differentiable functions from an open subset $D\subset\mathbb{R}^d$ to $\mathbb{R}^d$. For the stability analysis, we will need the following notions from \cite[Chapter 10]{pruss}. 
\begin{definition}[Normally Stable and Normally Hyperbolic Equilibria]
\label{def_norhyp}
Let $D\subset \mathbb{R}^d$ be an open set and $f \in C^1(D, \mathbb{R}^d)$. Consider the nonlinear system of ordinary differential equations (ODEs)
\begin{equation}\label{eq:ode}
\dot z = f(z).
\end{equation} 
Let $\mathcal E\subset D$ denote the set of equilibrium points for \eqref{eq:ode}, i.e., $z\in \mathcal E$ if and only if $f(z)=0$. For $z^*\in \mathcal E$ we assume that the following conditions are satisfied: 
\begin{enumerate}
\item Near $z^*$, $\mathcal{E}$ forms a $C^1$-manifold of dimension $0<m\le d$. This means that there exists an open set $U\subset \mathbb{R}^m$, $0\in U$, and there exists a function $\psi:\; U\to \mathbb{R}^d$ such that $\psi(U)\subset \mathcal E$, $\psi(0)=z^*$, and $\text{Rank }\psi'(0)=m$, where $\psi'(0)\in \mathbb{R}^{m\times m}$ denotes the Jacobian matrix of $\psi$ at $0$. 
\item  $T_{z^*}\mathcal{E} = N(A)$, where $T_{z^*}\mathcal{E}$ denotes the tangent space of $\mathcal{E}$ at $z^*$, $A := f'(z^*)$ is the Fr\'echet derivative of $f$ at $z^*$, and $N(A)$ is the null space of $A$. We may sometimes refer to $A$ as the {\em linearization near the equilibrium} $z^*$. 
\item $\lambda=0$ is a semi-simple eigenvalue of $A$, i.e., $N(A^2)=N(A)$. 
\item $\sigma(A)\cap i\mathbb{R} = \{0\}$. 
\end{enumerate}
Then, the following two definitions holds:
\begin{enumerate}
    \item[i.] $z^*$ is said to be \textbf{normally stable} if $\sigma(A) \setminus \{0\} \subset \mathbb{C}^-$, where $\mathbb{C}^-:=\{\lambda\in \mathbb{C}:\; \mathsf{Re }(\lambda)<0\}$.
    \item[ii.] $z^*$ is said to be \textbf{normally hyperbolic} if $\sigma(A)\cap \mathbb{C}^- \neq \varnothing$ and 
    $\sigma(A)\cap \mathbb{C}^+ \neq \varnothing$, where $\mathbb{C}^+:=\{\lambda\in \mathbb{C}:\; \mathsf{Re }(\lambda)>0\}$.
\end{enumerate}
\end{definition}

\begin{remark}
Geometrically, the condition in the definition of semi-simple eigenvalue is equivalent to requiring that the algebraic and geometric multiplicities of the eigenvalue $\lambda=0$ must coincide.  
\end{remark}
While for linear systems, we could take conditions (i) and (ii) as definition of stable and unstable equilibrium, respectively, for nonlinear systems of ODEs we will consider the following more general definition of stability due to Lyapunov. 

\begin{definition}[Stability in the sense of Lyapunov]
Let $t_0\ge 0$. An equilibrium point $z^*$ of \eqref{eq:ode} is said to be \textbf{stable (in the sense of Lyapunov)} if for every $\varepsilon>0$, there exists $\delta=\delta(\varepsilon)>0$ such that for any $z_0\in G$ satisfying the condition $\|z_0-z^*\|\le \delta$, the (unique) solution to the initial value problem 
\[
\dot z = f(z),\qquad z(t_0)=z_0
\]
satisfies $\|z(t)-z^*\|<\varepsilon$ for every $t\ge t_0$. 

The equilibrium point $z^*$ of \eqref{eq:ode} is said to be \textbf{unstable} if it is not stable (in the sense of Lyapunov). 
\end{definition}

The following {\em linearization principles} hold. 

\begin{theorem}
\label{normalstable}
Consider the nonlinear system of ODEs \eqref{eq:ode}, and assume that $z^*$ is normally stable. Then, the equilibrium $z^*$ is stable (in the sense of Lyapunov) and there exists $\delta > 0$ such that the unique solution to the initial value problem 
\[
\dot z = f(z),\qquad z(0)=z_0
\]
with $z_0\in B_\delta(z^*)$ exists for all $t \geq 0$. Moreover, there exist $z_{\infty} \in \mathcal{E}$ and constants $c_1,\;c_2 > 0$ such that: $$\norm{z(t) - z_{\infty}} \leq c_1e^{-c_2t}$$ for all $t \geq 0$.
\end{theorem}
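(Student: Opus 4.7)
The plan is to exploit the semi-simple eigenvalue at $0$ to split the phase space into a tangential direction (along $\mathcal{E}$) and a normal direction (in which $A$ is strictly contractive), flatten the equilibrium manifold by a local change of coordinates, and then close a Gronwall-type estimate showing that the normal component decays exponentially while the tangential component settles to a point $x_\infty \in N(A)$.

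First, I would use conditions (3) and (4) to introduce the spectral projection $P$ onto $N(A)$ along the range $R(A)$. Since $\lambda=0$ is semi-simple we have the direct sum $\mathbb{R}^d = N(A) \oplus R(A)$; since $\sigma(A) \cap i\mathbb{R} = \{0\}$, the restriction $A_s := A|_{R(A)}$ has $\sigma(A_s) \subset \mathbb{C}^-$, so there exist constants $M \geq 1$ and $\beta > 0$ with $\norm{e^{t A_s}} \leq M e^{-\beta t}$ for all $t \geq 0$. Condition (2) identifies the geometric tangent space at $z^*$ with $N(A)$, so the parameterization $\psi$ from condition (1) is transverse to $R(A)$, and the map $(x,y) \mapsto \psi(x) + y$ is a $C^1$-diffeomorphism from a neighborhood of $(0,0)$ in $N(A) \times R(A)$ onto a neighborhood of $z^*$ in $\mathbb{R}^d$.

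Next, setting $z(t) = \psi(x(t)) + y(t)$ and exploiting $f(\psi(x)) \equiv 0$, a Taylor expansion of $f$ about $\psi(x)$ would yield a reduced system of the form
\[
\dot x = G_1(x,y)\, y, \qquad \dot y = A_s\, y + G_2(x,y)\, y,
\]
with $G_1, G_2$ continuous and bounded on a neighborhood of the origin and $G_2(x,y) \to 0$ as $(x,y) \to 0$. Duhamel's formula applied to the $y$-equation, together with $\norm{e^{t A_s}} \leq M e^{-\beta t}$, would produce an integral inequality for $\norm{y(t)}$; choosing $\delta$ small enough that $M \cdot \sup\norm{G_2} < \beta/2$ on the corresponding tube allows us to absorb the nonlinear term and to deduce $\norm{y(t)} \leq c_1 e^{-c_2 t}\norm{y(0)}$ on the maximal interval where the trajectory stays inside the chart. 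The $x$-equation then yields $\int_0^\infty \norm{\dot x(s)}\, ds \leq C \int_0^\infty e^{-c_2 s}\, ds < \infty$, so $x(t)$ converges exponentially to some $x_\infty$, and therefore $z(t) \to z_\infty := \psi(x_\infty) \in \mathcal{E}$ at exponential rate.

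The main obstacle will be running a clean bootstrap argument that establishes simultaneously (a) Lyapunov stability, namely that a solution starting in a sufficiently small $B_\delta(z^*)$ remains in a prescribed $B_\varepsilon(z^*)$, (b) global forward existence, and (c) validity of the exponential estimate on all of $[0,\infty)$. The delicate point is that the flattening chart is only local, so one must argue, in the correct order, that smallness of the initial data forces $(x(t), y(t))$ to remain inside the coordinate chart for all $t \geq 0$ --- which is exactly what the exponential decay of $y$ and the uniform bound on $x(t) - x(0)$ supply, but only once the estimate is known to propagate. Lyapunov stability then follows from the triangle inequality $\norm{z(t) - z^*} \leq \norm{z(t) - z_\infty} + \norm{z_\infty - z^*}$ combined with the continuous dependence of $z_\infty$ on the initial data.
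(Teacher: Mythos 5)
Your outline is correct and is essentially the standard proof of this linearization principle: the paper itself does not prove Theorem \ref{normalstable} but cites Pr\"u\ss--Wilke (Satz 10.4.1), whose argument is exactly the one you sketch --- spectral splitting $\mathbb{R}^d=N(A)\oplus R(A)$ using semi-simplicity, flattening $\mathcal{E}$ via $(x,y)\mapsto\psi(x)+y$, Duhamel plus smallness of $G_2$ to get exponential decay of $y$, integrability of $\dot x$ to produce $x_\infty$, and a bootstrap to keep the trajectory in the chart. The only cosmetic point is your final sentence: Lyapunov stability does not require continuous dependence of $z_\infty$ on $z_0$, since the bounds $\norm{y(t)}\le Me^{-c_2 t}\norm{y(0)}$ and $\norm{x(t)-x(0)}\le C\norm{y(0)}$ already give $\norm{z(t)-z^*}$ small uniformly in $t$ whenever $\delta$ is small.
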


For a proof, we refer to \cite[Satz 10.4.1]{pruss}

\begin{theorem}
\label{normalhyper}
Consider the nonlinear system of ODEs \eqref{eq:ode}, and assume that $z^*$ is normally hyperbolic. Then the equilibrium $z^*$ is unstable. 

Furthermore, for every sufficiently small $\rho > 0$ there is a $\delta \in (0,\rho]$, such that the solution $z(t)$ with initial value $z_0 \in B_\delta(z^*)$ satisfies exactly one of the following two properties:
\begin{itemize}
    \item $d(z(t^*),\mathcal{E}):=\inf_{z^* \in \mathcal{E}}\norm{z(t^*) - z^*} > \rho$ for some $t^* > 0$; 
    \item $z(t)$ is globally defined for all $t \geq 0$, and there exist $z_{\infty} \in \mathcal{E}$ and constants $c_1,\;c_2 > 0$ such that: $\norm{z(t) - z_{\infty}} \leq c_1e^{-c_2t}$ for all $t \geq 0$.
\end{itemize}
\end{theorem}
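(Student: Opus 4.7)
The plan is to mimic the Prüss--Simonett proof of Theorem \ref{normalstable} (see \cite[Satz 10.4.1]{pruss}), replacing the purely stable normal bundle with the full hyperbolic splitting now available. Using that $\lambda=0$ is semi-simple and $\sigma(A)\cap i\mathbb{R}=\{0\}$, I would form from the resolvent of $A$ three commuting spectral projections $P_0,P_s,P_u$ onto $X_0:=N(A)$ and onto the invariant subspaces $X_s,X_u$ associated with $\sigma(A)\cap\mathbb{C}^-$ and $\sigma(A)\cap\mathbb{C}^+$. This yields $\mathbb{R}^d=X_0\oplus X_s\oplus X_u$ together with constants $\beta,M>0$ such that
\[
\|e^{At}P_s\|\le M e^{-\beta t},\qquad \|e^{-At}P_u\|\le M e^{-\beta t}\qquad(t\ge 0).
\]

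Next I would identify $\mathcal{E}$, near $z^*$, with a local centre manifold. By Definition \ref{def_norhyp}, $\mathcal{E}$ is a $C^1$-manifold of dimension $m=\dim X_0$ through $z^*$ with $T_{z^*}\mathcal{E}=X_0$, and since it consists entirely of fixed points it is flow-invariant; the uniqueness part of the centre--manifold theorem under the spectral gap (cf.\ \cite[Ch.~10]{pruss}) therefore forces $\mathcal{E}$ to coincide, near $z^*$, with the local centre manifold of \eqref{eq:ode}. The classical local invariant manifold machinery then supplies a $C^1$ centre--stable manifold $\mathcal{M}^{cs}$ tangent to $X_0\oplus X_s$, a $C^1$ centre--unstable manifold $\mathcal{M}^{cu}$ tangent to $X_0\oplus X_u$, and through each $z^\dagger\in\mathcal{E}$ close to $z^*$ a strong stable leaf $W^s(z^\dagger)$ of dimension $\dim X_s$; these leaves form a $C^1$ foliation of $\mathcal{M}^{cs}$ with uniform exponential attraction, i.e., $\|z(t)-z^\dagger\|\le c_1 e^{-c_2 t}$ for every orbit starting on $W^s(z^\dagger)$.

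Instability then comes for free: the unstable leaf $\mathcal{M}^u(z^*)\subset\mathcal{M}^{cu}$ has dimension $\dim X_u\ge 1$ by normal hyperbolicity, so arbitrarily close to $z^*$ there are initial data whose forward orbits leave any prescribed ball $B_\rho(z^*)$, which is precisely the negation of Lyapunov stability. For the dichotomy, I would fix $\rho>0$ small enough that both $\mathcal{M}^{cs}$ and $\mathcal{M}^{cu}$ are graphs over their tangent spaces inside $B_\rho(z^*)$, and choose $\delta\in(0,\rho]$ so that a standard saddle-point trapping argument applies: for $z_0\in B_\delta(z^*)$ either the orbit exits $B_\rho(z^*)$ at some first time $t^*>0$ (in which case $d(z(t^*),\mathcal{E})>\rho$, after harmlessly shrinking $\rho$ so that $\mathcal{E}\cap B_\rho(z^*)$ sits in a still smaller neighbourhood of $z^*$), or it remains in $B_\rho(z^*)$ for all $t\ge 0$. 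In the trapped case one shows $z_0\in\mathcal{M}^{cs}$, hence $z_0\in W^s(z_\infty)$ for a unique $z_\infty\in\mathcal{E}$, and the exponential estimate of the previous paragraph yields the claim.

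The main obstacle I anticipate is the construction of the strong stable foliation of $\mathcal{M}^{cs}$ with an attraction rate $c_2$ that is \emph{uniform} in the basepoint $z^\dagger$, together with the verification that every trapped orbit actually lies on some leaf rather than merely limiting onto $\mathcal{E}$ in the set-distance sense. This is precisely where the spectral gap between $X_s$ and $X_0$ combines with the $C^1$ regularity of $\mathcal{E}$ from condition (i) of Definition \ref{def_norhyp}, and it is the technical heart of \cite[Ch.~10]{pruss}.
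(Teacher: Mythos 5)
Your outline is essentially the argument the paper relies on: the paper gives no proof of this theorem but cites \cite[Satz 10.5.1]{pruss}, and your plan — spectral splitting $\mathbb{R}^d=X_0\oplus X_s\oplus X_u$, identification of $\mathcal{E}$ with the local centre manifold via $T_{z^*}\mathcal{E}=N(A)$ and semi-simplicity of $0$, instability from the nontrivial unstable manifold, and the trapped-orbit alternative handled through the centre--stable manifold with its strong stable foliation — is precisely the strategy of that reference (the normally hyperbolic counterpart of Satz 10.4.1). So your proposal is correct in approach and matches the paper's (cited) proof; the technical point you flag, the uniform-rate foliation and showing trapped orbits lie on a leaf, is indeed where the real work sits in \cite{pruss}.
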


A proof of this theorem can be found in \cite[Satz 10.5.1]{pruss}.

\begin{remark}
The above two theorems are generalizations of classical linearization principles for almost linear systems of ODEs, to the case in which the linearization $A$ has zero as eigenvalue (cf. \cite[Section 9.3]{boyce}). 
\end{remark}

\section{Equations of motion}\label{sec:EOM}

We fix an inertial frame $\Sigma_s$ and a body frame $\Sigma_b:=\{G;\mathsf{a}_1,\mathsf{a}_2,\mathsf{a}_3\}$ both with origin at $G$, and $\Sigma_b$ having axes directed along the (orthonormal basis of) eigenvectors of the inertia tensor $J$ of $\mathcal B_1$ with respect to $G$ (these axes are called {\em principal axes of inertia}). We denote $A_1$, $A_2$, $A_3$ the corresponding (positive) eigenvalues of $J$ (called {\em principal moments of inertia} of $\mathcal B_1$), and we assume that $A_1 \leq A_2 \leq A_3$. The inner body $\mathcal B_2$ has inertia tensor $I\mathbf{E_0}$ relative to $G$, where $\mathbf{E_0}$ is the identity tensor. Thus, $J_c := J + I\mathbf{E_0}$ denotes the inertia tensor (with respect to the center of mass $G$) of the whole system of rigid body with spherical damper. 

Let $\omega$ and $\omega_1$ be the angular velocities of $\mathcal B_1$ and $\mathcal B_2$ in the frame $\Sigma_s$, respectively. Similarly, we denote with $\Omega$ and $\Omega_1$ the angular velocities of the outer and inner bodies in the frame $\Sigma_b$, respectively. 

We assume that the interaction between the solids is described by viscous forces (due to the presence of the lubricant) that produce a total torque (relative to $G$) on $\mathcal B_1$ of the form $T:=k(\omega_1-\omega)$ in $\Sigma_s$, with $k$ a positive constant depending on the lubricant density, viscosity and thickness $h$, and on the damper radius $a$ (see \cite[Equation (8.6)]{chern}). 

We begin with the following balances of angular momentum in the inertial frame $\Sigma_s$
\begin{align}
\label{outerangmom}
    \frac{dm_{total}}{dt} &= 0, \\
\label{innerangmom}
    \frac{dm_{inner}}{dt} &= -k(\omega_1 - \omega).
\end{align}

In the two above equations, $m_{total}$ and $m_{inner}$ are the total angular momentum and the angular momentum of the inner body $\mathcal B_2$ calculated with respect to $G$, respectively.

Given the time dependence of the volumes  $\mathcal B_1$  
(and thus of the inertia tensor of $\mathcal B_1$) in $\Sigma_s$, it is more convenient to rewrite the above equations of motion in the moving frame $\Sigma_b$. In such a frame, $J = \text{diag}(A_1,A_2,A_3)$. We then note that $\Omega = R^T\omega$ and $\Omega_1 = R^T\omega_1$ for some $R = R(t)\in \mathsf{SO}(3)$ for all $t\ge 0$, with $R(0)=\bf{E}_0$. Note that $\Omega$ is the axial vector of $R^T\dot{R}$, i.e., $R^T\dot{R}u = \Omega \times u$ for every $u \in \mathbb{R}^3$. In the body frame $\Sigma_b$, the total angular momentum $M_{total}$ is
\begin{equation*}
    M_{total} :=R^Tm_{total}= J\Omega+I\Omega_1
\end{equation*}

The balance of angular momentum in the body frame $\Sigma_b$ then becomes
\begin{gather}
J\dot{\Omega} + \Omega \times (J \Omega) + I\dot{\Omega}_1 + \Omega \times (I\Omega_1) = 0.  \label{total}
\end{gather}

The inner body is spherical and hence its inertia tensor about the center of mass is just $I\mathbf{E_0}$ (constant) regardless of frame. 
Since $\omega = R\Omega, \omega_1 = R\Omega_1$, the previous equation can be rewritten as follows
\begin{gather}
I\dot{\Omega}_1 + \Omega \times (I\Omega_1) = -k(\Omega_1 - \Omega).\label{inner}
\end{gather}
Subtracting \eqref{inner} from \eqref{total} and collecting the two gives the following system of equations describing the motion of a rigid body with a damper in the moving frame $\Sigma_b$:
\begin{equation*}
\begin{split}
    J\dot{\Omega} + \Omega \times (J \Omega) &= k(\Omega_1 - \Omega),\\
    I\dot{\Omega}_1 + \Omega \times (I\Omega_1) &= -k(\Omega_1 - \Omega). 
\end{split}
\end{equation*}
Since $J$ is a positive definite tensor, we invert $J$ to get the following system of differential equations governing the motion of the rigid body $\mathcal B_1$ with the spherical damper $\mathcal B_2$
\begin{equation}
\label{ODE}
    \begin{cases}
      \dot{\Omega} = J^{-1}(k(\Omega_1 - \Omega) - \Omega \times (J \Omega)),\\
      \dot{\Omega}_1 =\displaystyle -\frac{k}{I}(\Omega_1 - \Omega) - \Omega \times \Omega_1.
    \end{cases}       
\end{equation}

Consider the function  $F:\mathbb{R}^3\times\mathbb{R}^3\to\mathbb{R}^6$ such that 
\begin{equation}\label{eq:F}
\qquad F(\Omega, \Omega_1):=\left[\begin{matrix}
J^{-1}(k(\Omega_1 - \Omega) - \Omega \times (J \Omega))
\\
\displaystyle -\frac{k}{I}(\Omega_1 - \Omega) - \Omega \times \Omega_1
\end{matrix}\right]. 
\end{equation} 
Since $F$ is continuously differentiable in $\mathbb{R}^3\times\mathbb{R}^3$, for any initial data $(\Omega_0,\Omega_{10}) \in \mathbb{R}^3\times\mathbb{R}^3$, the initial value problem 
\[
\frac{d}{dt}\left[\begin{matrix}
\Omega
\\
\Omega_1\end{matrix}\right]=F(\Omega, \Omega_1),\qquad 
\left[\begin{matrix}
\Omega(0)
\\
\Omega_1(0)\end{matrix}\right]=\left[\begin{matrix}
\Omega_0
\\
\Omega_{10}\end{matrix}\right]
\]
has a unique solution defined on the maximal existence interval $[0,t_+)$ for some $t_+=t_+(\Omega_0,\Omega_{10})>0$. We will soon see that $t_+=+\infty$. While we cannot explicitly solve \eqref{ODE} in all cases, we can derive some useful properties of the solutions to \eqref{ODE} like the balances of the kinetic energy and total angular momentum. 
 
First consider the kinetic energy, defined by 
\[
V(\Omega,\Omega_1) := \frac{1}{2}(\inner{\Omega}{J\Omega} + I\inner{\Omega_1}{\Omega_1}). 
\]
By taking the time derivative of $V$ and using equations \eqref{ODE}, we get
\begin{align*}
    \frac{d V}{dt} &= \frac{1}{2}(2\inner{\Omega}{J\dot\Omega} + 2I\inner{\Omega_1}{ \dot{\Omega}_1}) \\
    &= \inner{\Omega}{k(\Omega_1 - \Omega) - \Omega \times (J \Omega)} + \inner{\Omega_1}{-k(\Omega_1 - \Omega) - I(\Omega \times \Omega_1)} \\
    &= -k\norm{\Omega_1 - \Omega}^2.
\end{align*}

Hence the rate of change of the kinetic energy along the solutions of  \eqref{ODE} reads as follows
\begin{equation}
\label{tderiv-kin}
    \frac{d}{dt}V(\Omega(t),\Omega_1(t)) = -k\norm{\Omega_1(t) - \Omega(t)}^2. 
\end{equation}
The above equation physically expresses the balance of the kinetic energy of the whole system.

Another important balance is that of the angular momentum. Consider the quantity $K^2 = K^2(\Omega,\Omega_1):=\norm{J\Omega + I\Omega_1}^2$. 
By taking the time derivative of $K^2$ we get the following equation:
\begin{align}
    \frac{dK^2}{dt} &= 2\inner{J\Omega + I\Omega_1}{J\dot{\Omega} + I\dot{\Omega}_1} \nonumber\\
    &= 2\inner{J\Omega + I\Omega_1}{k(\Omega_1 - \Omega) - \Omega \times (J \Omega) -k(\Omega_1 - \Omega) - I(\Omega \times \Omega_1)}\nonumber\\
    &= -2\inner{J\Omega + I\Omega_1}{\Omega \times (J \Omega + I\Omega_1)}\nonumber\\
    &= 0. \label{tderiv-ang}
\end{align}

Thus the quantity $K^2$ is constant along solutions of \eqref{ODE}. We summarize the above results in the next proposition. 
\begin{proposition}\label{prop:balances}
Let $(\Omega,\Omega_1)$ be a solution to the system of ODEs \eqref{ODE}. Then,  
\begin{itemize}
\item[i.] $(\Omega,\Omega_1)$ satisfies \eqref{tderiv-kin}. 
\item[ii.] $(\Omega,\Omega_1)$ satisfies \eqref{tderiv-ang}. 
\end{itemize}
\end{proposition}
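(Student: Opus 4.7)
The proposition collects two identities that must be verified by direct differentiation along solutions, so the plan is essentially a careful chain-rule computation exploiting two structural facts: the symmetry of the inertia tensor $J$, and the orthogonality $\inner{u}{\Omega\times u} = 0$ valid for every $u\in\mathbb{R}^3$. This is the same orthogonality that underlies the vanishing of the power of Euler-type gyroscopic torques.

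For part (i), I would differentiate $V(\Omega,\Omega_1) = \tfrac12\inner{\Omega}{J\Omega} + \tfrac{I}{2}\inner{\Omega_1}{\Omega_1}$ along a solution. Using $J^T = J$, the chain rule gives $\dot V = \inner{\Omega}{J\dot\Omega} + I\inner{\Omega_1}{\dot\Omega_1}$. Substituting the right-hand sides of \eqref{ODE} (after multiplying the first equation by $J$ on the left, and the second by $I$) produces two kinds of scalar contributions: a pair of gyroscopic terms $-\inner{\Omega}{\Omega\times(J\Omega)}$ and $-I\inner{\Omega_1}{\Omega\times\Omega_1}$, both of which vanish by the orthogonality identity; and a pair of damping terms $\inner{\Omega}{k(\Omega_1-\Omega)} - \inner{\Omega_1}{k(\Omega_1-\Omega)} = -k\norm{\Omega_1-\Omega}^2$.

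For part (ii), the plan is to differentiate $K^2 = \norm{J\Omega + I\Omega_1}^2$, obtaining $\dot{K^2} = 2\inner{J\Omega + I\Omega_1}{J\dot\Omega + I\dot\Omega_1}$. The key observation is that when one forms $J\dot\Omega + I\dot\Omega_1$ by adding the two equations of \eqref{ODE} (after left-multiplication by $J$ and $I$, respectively), the reactive damping terms $\pm k(\Omega_1-\Omega)$ cancel exactly, leaving only $-\Omega\times(J\Omega + I\Omega_1)$. Substituting this yields $\dot{K^2} = -2\inner{J\Omega+I\Omega_1}{\Omega\times(J\Omega+I\Omega_1)} = 0$ by the same orthogonality identity. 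Physically, this encodes that the internal viscous coupling transfers angular momentum between the two bodies but cannot change the total.

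There is no real obstacle: the proposition is a routine verification, and in fact the derivations of \eqref{tderiv-kin} and \eqref{tderiv-ang} are carried out line-by-line in the discussion immediately preceding the statement. The only bookkeeping points are the sign of the damping term (which comes with opposite signs on the two equations, producing the cancellation in (ii) and the combination into a square in (i)) and the use of $J = J^T$ when writing $\tfrac{d}{dt}\inner{\Omega}{J\Omega} = 2\inner{\Omega}{J\dot\Omega}$.
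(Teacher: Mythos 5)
Your proposal is correct and follows essentially the same route as the paper, which derives \eqref{tderiv-kin} and \eqref{tderiv-ang} by exactly this direct differentiation: symmetry of $J$, cancellation of the $\pm k(\Omega_1-\Omega)$ terms, and the orthogonality $\inner{u}{\Omega\times u}=0$. Nothing is missing.
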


Since $J$ is positive definite and $I$ is a positive constant, a corollary of the statement (i) is that the functions $\Omega(t), \Omega_1(t)$ are uniformly bounded for all $t \geq 0$, and by the classical continuation theorem for ODEs (see \cite[Theorem 4.1]{coddington}), it follows that $t_+=+\infty$. In addition, Proposition \ref{prop:balances}(ii) allows us to define invariant sets for all solutions of \eqref{ODE}. Indeed, for a given initial condition $(\Omega_0,\Omega_{10})$, let $M = K^2(\Omega_0,\Omega_{10})$ and define the set $A(M)$: $$A(M) = \{(\omega,\omega_1) \in \mathbb{R}^6 : K^2(\omega,\omega_1) = M\}$$ Then the solution to \eqref{ODE} corresponding to $(\Omega_0,\Omega_{10})$ must stay within $A(M)$ for all times. Furthermore, since $K^2$ is a continuous function and $A(M)$ is the preimage of a compact set in $\mathbb{R}^6$, we have the following property. 
\begin{lemma}
\label{A-compact}
The set $A(M)$ is compact.
\end{lemma}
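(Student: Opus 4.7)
The plan is to apply the Heine--Borel characterization of compactness in $\mathbb{R}^6$, so it suffices to verify that $A(M)$ is closed and bounded.

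For closedness, I note that $K^2:\mathbb{R}^6\to\mathbb{R}$ defined by $K^2(\omega,\omega_1)=\norm{J\omega+I\omega_1}^2$ is continuous: it is the composition of the continuous (linear) map $L:(\omega,\omega_1)\mapsto J\omega+I\omega_1$, $\mathbb{R}^6\to\mathbb{R}^3$, with the continuous squared Euclidean norm on $\mathbb{R}^3$. Since $\{M\}\subset\mathbb{R}$ is closed, its preimage $A(M)=(K^2)^{-1}(\{M\})$ is closed in $\mathbb{R}^6$.

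For boundedness, I would exploit the positive-definiteness of the inertia tensor $J$ (whose smallest eigenvalue is $A_1>0$) together with the positivity of the scalar $I$. Expanding
\[
K^2(\omega,\omega_1)=\inner{J\omega}{J\omega}+2I\inner{J\omega}{\omega_1}+I^2\norm{\omega_1}^2=M
\]
and using the coercive estimate $\norm{J\omega}\ge A_1\norm{\omega}$ in combination with a Young-type absorption of the cross term, the aim is to derive an a priori bound of the form $\norm{\omega}^2+\norm{\omega_1}^2\le C(M,A_1,I)$, thus ruling out divergent sequences in $A(M)$.

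I expect the boundedness step to be the main obstacle, because $K^2$ is a rank-three quadratic form on $\mathbb{R}^6$ and so the Young-type absorption has to be calibrated very carefully: the null space of $L$ is three-dimensional, and one must arrange the weights so that the contributions from $\norm{J\omega}^2$ and $I^2\norm{\omega_1}^2$ simultaneously retain positive coefficients after handling the indefinite cross term. A clean way to organize this is to split $(\omega,\omega_1)$ into components parallel and transverse to $\ker L$ and track how the constraint $K^2=M$ constrains the transverse part, while separately bounding the parallel part via the positivity of $A_1$ and $I$. Once this bound is in place, closedness and boundedness combine through Heine--Borel to yield compactness of $A(M)$.
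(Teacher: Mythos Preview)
Your closedness argument is fine, but the boundedness step cannot be completed as you outline it, because $A(M)$ is in fact \emph{not} bounded. The linear map $L:(\omega,\omega_1)\mapsto J\omega+I\omega_1$ from $\mathbb{R}^6$ to $\mathbb{R}^3$ has a three-dimensional kernel, namely $\{(\omega,-I^{-1}J\omega):\omega\in\mathbb{R}^3\}$, and the constraint $K^2(\omega,\omega_1)=\norm{L(\omega,\omega_1)}^2=M$ imposes no restriction whatsoever on the component of $(\omega,\omega_1)$ lying in $\ker L$. Concretely, if $(\omega_0,\omega_{10})\in A(M)$ then $(\omega_0+\alpha v,\ \omega_{10}-\alpha I^{-1}Jv)\in A(M)$ for every $v\in\mathbb{R}^3$ and every $\alpha\in\mathbb{R}$, so $A(M)$ contains unbounded affine subspaces. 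Your proposed ``Young-type absorption'' therefore cannot produce a bound of the form $\norm{\omega}^2+\norm{\omega_1}^2\le C(M,A_1,I)$: no matter how the weights are calibrated, the cross term $2I\inner{J\omega}{\omega_1}$ can exactly cancel $\norm{J\omega}^2+I^2\norm{\omega_1}^2$ along the kernel direction. Your own remark that one should ``separately bound the parallel part via the positivity of $A_1$ and $I$'' is precisely where the argument breaks, since the parallel (kernel) part is entirely unconstrained by $K^2=M$.

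For comparison, the paper's justification is the single sentence preceding the lemma: ``since $K^2$ is a continuous function and $A(M)$ is the preimage of a compact set \ldots''. That reasoning establishes only closedness (preimages of compact sets under continuous maps are closed, not compact), so it suffers from the same gap. The lemma, as stated, is false; what the paper actually uses later (boundedness of trajectories, compactness of $\omega$-limit sets) follows instead from the kinetic-energy decay in Proposition~\ref{prop:balances}(i), not from any compactness of $A(M)$.
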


\section{Equilibria and their stability properties}\label{sec:equilibria}
\subsection{Equilibria}
We begin this section by characterizing the equilibria of \eqref{ODE}  in the following theorem, first proved in \cite{evo}.
\begin{theorem}
\label{equilibria}
The point $(\Omega^*,\Omega_1^*) \in \mathbb{R}^3\times\mathbb{R}^3$ is an equilibrium point of \eqref{ODE} if and only if $\Omega^* = \Omega_1^*$ and $\Omega^*$ is either the zero vector in $\mathbb{R}^3$ or an eigenvector of $J$.
\end{theorem}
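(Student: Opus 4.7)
The plan is to establish the two implications of the biconditional separately, with the reverse (sufficient) direction being a direct substitution, and the forward (necessary) direction relying on the kinetic energy balance from Proposition \ref{prop:balances}(i).

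For the sufficient direction, I would suppose that $\Omega^* = \Omega_1^*$ and that $\Omega^*$ is either the zero vector or satisfies $J\Omega^* = \lambda \Omega^*$ for some $\lambda \in \mathbb{R}$. Plugging into the right-hand side of \eqref{ODE}, the term $k(\Omega_1^* - \Omega^*)$ vanishes because $\Omega^* = \Omega_1^*$; the term $\Omega^* \times (J\Omega^*)$ vanishes because $J\Omega^*$ is a scalar multiple of $\Omega^*$ (or zero); and $\Omega^* \times \Omega_1^* = \Omega^* \times \Omega^* = 0$. Hence both components of $F(\Omega^*,\Omega_1^*)$ in \eqref{eq:F} are zero, and $(\Omega^*,\Omega_1^*)$ is an equilibrium.

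For the necessary direction, suppose $(\Omega^*,\Omega_1^*)$ is an equilibrium. Then the constant trajectory $(\Omega(t),\Omega_1(t))\equiv(\Omega^*,\Omega_1^*)$ solves \eqref{ODE}, so the kinetic energy $V$ is constant along it and $\tfrac{d}{dt}V(\Omega(t),\Omega_1(t))=0$. Applying Proposition \ref{prop:balances}(i), which yields $\frac{d}{dt}V = -k\norm{\Omega_1^* - \Omega^*}^2$, and recalling $k>0$, I conclude $\Omega^* = \Omega_1^*$. (As an alternative not requiring the energy balance, one could take the inner product of the first equation in \eqref{ODE} with $\Omega^*$ and of the second with $\Omega_1^*$; each scalar triple product on the right vanishes, leaving $\inner{\Omega^*}{\Omega_1^* - \Omega^*}=0$ and $\inner{\Omega_1^*}{\Omega_1^* - \Omega^*}=0$, whose difference is $\norm{\Omega_1^* - \Omega^*}^2 = 0$.)

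Substituting $\Omega^* = \Omega_1^*$ back into the first equation of \eqref{ODE} leaves $J^{-1}(\Omega^* \times (J\Omega^*)) = 0$, i.e., $\Omega^* \times (J\Omega^*) = 0$. This means $J\Omega^*$ is parallel to $\Omega^*$, which, together with the invertibility of $J$, forces either $\Omega^* = 0$ or the existence of $\lambda \in \mathbb{R}$ with $J\Omega^* = \lambda\Omega^*$, completing the characterization. There is no real obstacle here; the only conceptual step is recognizing that the dissipation identity \eqref{tderiv-kin} immediately pins down $\Omega^* = \Omega_1^*$, after which the eigenvector condition follows mechanically.
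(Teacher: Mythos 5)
Your proof is correct and follows essentially the same route as the paper: force $\Omega^* = \Omega_1^*$ from the dissipative term (your energy-balance step, or its scalar-triple-product alternative, is just a repackaging of the paper's dotting of the second equilibrium equation with $\Omega_1^*-\Omega^*$), then conclude $\Omega^*\times(J\Omega^*)=0$ and hence the eigenvector condition, with the converse by direct substitution.
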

\begin{proof}
Assume that $F(\Omega^*,\Omega_1^*) = 0$, then from \eqref{ODE} we have:
\begin{equation*}\begin{cases}
    \Omega^* \times (J \Omega^*) = k(\Omega_1^* - \Omega^*)\\
    \Omega^* \times (I\Omega_1^*) = -k(\Omega_1^* - \Omega^*)
\end{cases}\end{equation*}

Taking the dot product of both sides of the second equation by $(\Omega_1^* - \Omega^*)$, it gives $-k\norm{\Omega_1^* - \Omega^*}^2 = 0$, and thus we must have $\Omega^* = \Omega_1^*$. Replacing the latter in the first equation of the above system, it follows that $\Omega^* \times (J \Omega^*) = 0$ which holds if and only if $\Omega^*$ is either the zero vector in $\mathbb{R}^3$ or an eigenvector of $J$.

The converse implication is trivially true. 
\end{proof}

\begin{remark}
\label{equilibria-remark}
Define $\{\eu_i\}_{i=1}^6$ to be the standard basis vectors of $\mathbb{R}^6$. From our assumption that $A_1 \leq A_2 \leq A_3$, there are four possible cases for the set of equilibria $\mathcal{E}$:
\begin{enumerate}
    \item $A_1 < A_2 < A_3 \implies \mathcal{E} = \text{span}\{\eu_1 + \eu_4\}\cup\text{span}\{\eu_2 + \eu_5\}\cup\text{span}\{\eu_3 + \eu_6\}$
    \item $A_1 = A_2 < A_3 \implies \mathcal{E} = \text{span}\{\eu_1 + \eu_4, \eu_2 + \eu_5\}\cup\text{span}\{\eu_3 + \eu_6\}$
    \item $A_1 < A_2 = A_3 \implies \mathcal{E} = \text{span}\{\eu_1 + \eu_4\}\cup\text{span}\{\eu_2 + \eu_5, \eu_3 + \eu_6\}$
    \item $A_1 = A_2 = A_3 \implies \mathcal{E} = \text{span}\{\eu_1 + \eu_4, \eu_2 + \eu_5, \eu_3 + \eu_6\}$
\end{enumerate}
\end{remark}

\subsection{Nonlinear stability}
For the study of the stability, we will use the linearization principles in Theorem \ref{normalstable} and Theorem \ref{normalhyper}. Let us start by linearizing the equations \eqref{ODE} around a (nonzero\footnote{The aim of this paper is to characterize the long-time behaviour of generic trajectories, corresponding to nonzero initial conditions, then any attained equilibrium must be nonzero by the conservation of the total angular momentum (see Proposition \ref{prop:balances}).}) equilibrium $(\Omega^*,\Omega_1^*\equiv \Omega^*)$. By Theorem \ref{equilibria} there exists $\lambda^*\in\{A_1,A_2,A_3\}$ such that $(J-\lambda^*\mathbf{E_0})\Omega^*=0$. We will also denote $\mathsf{eigen}_J(\lambda^*)$ the eigenspace corresponding to the eigenvalue $\lambda^*$ of $J$. 

For every $(\Omega,\Omega_1)\in \mathbb{R}^6$, the Gateaux derivative of $F$ at $(\Omega^*,\Omega_1^*\equiv \Omega^*)$ is given by 
\begin{equation}\label{eq:linearization}\begin{split}
\frac{d}{dt}&F(\Omega^*+t\Omega,\Omega_1^*+t\Omega_1)\vert_{t=0}
\\
&=\left[\begin{matrix}
J^{-1}(k(\Omega_1 - \Omega) - \Omega^*\times (J-\lambda^*\mathbf{E_0})\Omega)
\\
-k/I(\Omega_1 - \Omega) -\Omega^* \times (\Omega_1-\Omega)
\end{matrix}\right] = L^*\left[\begin{matrix}
\Omega
\\
\Omega_1
\end{matrix}\right],
\end{split}\end{equation}
for some linear transformation $L^*=L^*(J,k,I,\Omega^*,\Omega^*_1):\; \mathbb{R}^3\times\mathbb{R}^3\to \mathbb{R}^3\times\mathbb{R}^3$. We will refer to $L^*$ as the {\em linearization at the equilibrium} $(\Omega^*,\Omega_1^*\equiv \Omega^*)$. 

The rest of this section concerns the above linear transformation. 
We start with the following lemma asserting that the null space of $L^*$ is characterized by equilibria. 
\begin{lemma}
\label{kernel}
$N(L^*)=\{v=(\Omega,\Omega_1)\in \mathcal E:\; \Omega=\Omega_1\in \mathsf{eigen}_J(\lambda^*)\}$. 
\end{lemma}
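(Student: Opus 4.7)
The plan is to check both inclusions, with the reverse inclusion ($\supseteq$) being an immediate substitution into \eqref{eq:linearization}. The forward inclusion proceeds in three steps.

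First, I would unpack $L^* v = 0$ as the pair of equations
\begin{equation*}
k(\Omega_1-\Omega) - \Omega^*\times (J-\lambda^*\mathbf{E_0})\Omega = 0, \qquad -\tfrac{k}{I}(\Omega_1-\Omega) - \Omega^*\times(\Omega_1-\Omega) = 0,
\end{equation*}
where I have used that $J^{-1}$ is invertible to strip it from the first component. Then, mimicking the trick from the proof of Theorem \ref{equilibria}, I would take the inner product of the second equation with $\Omega_1-\Omega$. The cross-product term vanishes since $\Omega^*\times(\Omega_1-\Omega)\perp(\Omega_1-\Omega)$, leaving $-\tfrac{k}{I}\|\Omega_1-\Omega\|^2=0$, hence $\Omega_1=\Omega$.

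Second, substituting $\Omega_1=\Omega$ back into the first equation reduces the kernel condition to $\Omega^*\times (J-\lambda^*\mathbf{E_0})\Omega = 0$, i.e., the vector $(J-\lambda^*\mathbf{E_0})\Omega$ must be parallel to $\Omega^*$. The key step (and the main obstacle) is then to upgrade this parallelism to the stronger conclusion $(J-\lambda^*\mathbf{E_0})\Omega=0$. For this I would exploit the symmetry of $J$: since $J$ is symmetric, its range decomposition gives $\mathbb{R}^3=\mathsf{eigen}_J(\lambda^*)\oplus \mathsf{eigen}_J(\lambda^*)^\perp$ orthogonally, and $J-\lambda^*\mathbf{E_0}$ maps $\mathsf{eigen}_J(\lambda^*)^\perp$ into itself bijectively while annihilating $\mathsf{eigen}_J(\lambda^*)$. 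Hence $(J-\lambda^*\mathbf{E_0})\Omega\in\mathsf{eigen}_J(\lambda^*)^\perp$, whereas $\Omega^*\in\mathsf{eigen}_J(\lambda^*)$; the only vector belonging to both subspaces is $0$, so $(J-\lambda^*\mathbf{E_0})\Omega=0$, i.e., $\Omega\in\mathsf{eigen}_J(\lambda^*)$.

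Combined with $\Omega=\Omega_1$, this characterization matches exactly the set of equilibria identified in Theorem \ref{equilibria} lying in $\mathsf{eigen}_J(\lambda^*)$, yielding the claim. The reverse inclusion is immediate: if $\Omega=\Omega_1\in\mathsf{eigen}_J(\lambda^*)$, both components of \eqref{eq:linearization} vanish, since $\Omega_1-\Omega=0$ and $(J-\lambda^*\mathbf{E_0})\Omega=0$.
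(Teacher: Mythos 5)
Your proof is correct and follows essentially the same route as the paper: both first obtain $\Omega=\Omega_1$ by dotting the second kernel equation with $\Omega_1-\Omega$, and then use the symmetry of $J$ to upgrade $\Omega^*\times(J-\lambda^*\mathbf{E_0})\Omega=0$ to $(J-\lambda^*\mathbf{E_0})\Omega=0$. The only difference is cosmetic: the paper writes $(J-\lambda^*\mathbf{E_0})\Omega=\bar\lambda\Omega^*$ and shows $\bar\lambda=0$ by dotting with $\Omega^*$, whereas you phrase the same orthogonality fact via the invariant splitting $\mathbb{R}^3=\mathsf{eigen}_J(\lambda^*)\oplus\mathsf{eigen}_J(\lambda^*)^\perp$.
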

\begin{proof}
Let $v=(\Omega,\Omega_1) \in \mathbb{R}^3\times\mathbb{R}^3$ be such that $L^*v = 0$. From \eqref{eq:linearization}, the condition $L^*v = 0$ is equivalent to the following system of equations:
\begin{equation}\label{eq:L=0}\begin{split}
&k(\Omega_1 - \Omega) -\Omega^*\times (J-\lambda^*\mathbf{E_0})\Omega=0,
\\
&-\frac kI (\Omega_1 - \Omega) - \Omega^* \times (\Omega_1-\Omega)=0.
\end{split}\end{equation}
Dot multiplying both sides of the second equation by $\Omega -\Omega_1$, we find that $\norm{\Omega-\Omega_1}=0$, thus implying that $\Omega=\Omega_1$. 
As a consequence, the first equation of the above system nows reads 
\[
\Omega^*\times[(J-\lambda^*\mathbf{E_0})\Omega]=0,
\]
and we can infer the existence of a scalar $\bar \lambda\in \mathbb{R}$ such that
\[
(J-\lambda^*\mathbf{E_0})\Omega=\bar\lambda \Omega^*.
\]
Let us take the dot product of the latter displayed equation by $\Omega^*$ and we find that 
\[
\bar\lambda \norm{\Omega^*}^2=\inner{\Omega}{J\Omega^*}-\lambda^*\inner{\Omega}{\Omega^*}=0.
\]
Thus, $\bar\lambda=0$ and we can conclude that also $(J-\lambda^*\mathbf{E_0})\Omega=0$. 

Summarizing, we have shown that $v=(\Omega,\Omega_1) \in N(L^*)$ satisfies $\Omega=\Omega_1$ and $(J-\lambda^*\mathbf{E_0})\Omega=0$. Hence, by Theorem \ref{equilibria}, we conclude that $v=(\Omega,\Omega_1)\in \mathcal E$ with $\Omega=\Omega_1\in \mathsf{eigen}_J(\lambda^*)$. 

The converse inclusion is immediately verified. 
\end{proof}

Due to the presence of the zero eigenvalue, we cannot apply the classical linearization principles for almost linear systems (see \cite[Section 9.3]{boyce}). This motivates our attempt to use Theorem \ref{normalstable} and Theorem  \ref{normalhyper} instead. The remainder of this section focuses on verifying that our equilibria satisfy the conditions of those theorems, starting with the following lemmas.
\begin{lemma}
\label{noimag}
$\sigma(L^*)\cap i\mathbb{R} = \{0\}$. 
\end{lemma}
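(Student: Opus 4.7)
By Lemma~\ref{kernel}, $0\in\sigma(L^*)$, so the statement amounts to showing that no nonzero purely imaginary number is an eigenvalue of $L^*$. My plan is to assume $L^*v=i\beta v$ for some $v=(\Omega,\Omega_1)\in\mathbb C^6\setminus\{0\}$ with $\beta\in\mathbb R\setminus\{0\}$, and to derive a contradiction. Working in the principal-axis frame of $J$, I write $\Omega^*=\omega^* e_k$ and $\lambda^*=A_k$, so that both $J$ and the map $u\mapsto\Omega^*\times u$ preserve the orthogonal splitting $\mathbb R^3=\mathrm{span}\{e_k\}\oplus\{e_k\}^\perp$. Consequently, $L^*$ leaves invariant the splitting $V_\parallel\oplus V_\perp$ of $\mathbb R^6$, with $V_\parallel:=\mathrm{span}\{(e_k,0),(0,e_k)\}$. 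On the two-dimensional block $V_\parallel$, $L^*$ is represented by $\bigl(\begin{smallmatrix}-k/\lambda^* & k/\lambda^* \\ k/I & -k/I\end{smallmatrix}\bigr)$, whose spectrum is $\{0,\,-k(1/\lambda^*+1/I)\}$; neither element is a nonzero purely imaginary number, so the $V_\parallel$-component of $v$ must vanish and I may assume $\Omega,\Omega_1\in\{e_k\}^\perp\otimes\mathbb C$.

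The core of the argument is then the analysis of $L^*|_{V_\perp}$. I would diagonalize $\Omega^*\times$ over $\mathbb C$ by introducing the basis $v_\pm:=e_i\mp ie_j$ of $\{e_k\}^\perp\otimes\mathbb C$, where $\{i,j\}=\{1,2,3\}\setminus\{k\}$, so that $\Omega^*\times v_\pm=\pm i\omega^* v_\pm$. Setting $\Omega=c_+v_++c_-v_-$ and $\Omega_1=d_+v_++d_-v_-$, the inner-body equation (second row of \eqref{eq:linearization}) does not involve $J$ and decouples in this basis; solving the resulting scalar relations yields
\begin{equation*}
d_\pm \;=\; c_\pm\,\frac{k\pm iI\omega^*}{k+iI(\pm\omega^*+\beta)}.
\end{equation*}
Substituting these expressions into the outer-body equation (first row of \eqref{eq:linearization}) produces a homogeneous $2\times 2$ complex linear system in $(c_+,c_-)$; the two sectors remain coupled through the off-diagonal entry of $J-\lambda^*\mathbf{E_0}$ in the $v_\pm$-basis, whose coefficient is proportional to $A_i-A_j$.

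Existence of a nontrivial solution forces the determinant of that $2\times 2$ system to vanish. Clearing denominators and separating the resulting complex equation into real and imaginary parts gives two real polynomial identities in $\beta^2$ whose coefficients depend only on $\omega^*,k,I,\lambda^*,A_i,A_j$. A direct computation shows that the imaginary part pins down $\beta^2$ via
\begin{equation*}
[\,2A_iA_j+I(A_i+A_j)\,]\beta^2 \;=\; \omega^{*2}\bigl[\,2(A_i-\lambda^*)(A_j-\lambda^*)-I(A_i+A_j-2\lambda^*)\,\bigr],
\end{equation*}
and plugging this value into the real-part equation yields an identity whose only solution with $k,I,\omega^*>0$ is $\beta=0$, contradicting the standing assumption. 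The principal obstacle I anticipate is precisely this last algebraic consistency check between the real and imaginary parts of the characteristic determinant; in addition, the exceptional values $\beta=\pm\omega^*$, at which the denominator in the formula for $d_\pm$ vanishes, require a separate but elementary direct inspection of the eigenvalue equation in the $v_\pm$-basis.
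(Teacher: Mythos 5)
Your structural setup is sound: the splitting $V_\parallel\oplus V_\perp$ is indeed invariant for $L^*$, the $2\times 2$ block on $V_\parallel$ has spectrum $\{0,-k(1/\lambda^*+1/I)\}$, the basis $v_\pm$ diagonalizes $\Omega^*\times$, and eliminating $\Omega_1$ via the second row of \eqref{eq:eigenvalue_p} is legitimate (note, incidentally, that the denominator $k+iI(\pm\omega^*+\beta)$ has real part $k>0$, so it never vanishes: the ``exceptional values $\beta=\pm\omega^*$'' you flag do not exist, which suggests the computation was not actually carried through). The problem is that everything after ``a direct computation shows'' is asserted rather than proved, and that is where the entire content of the lemma sits. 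Since the restriction of $L^*$ to $V_\perp$ has a real characteristic polynomial $\lambda^4+a_3\lambda^3+a_2\lambda^2+a_1\lambda+a_0$, evaluating at $\lambda=i\beta$ always yields an imaginary part $\beta(a_1-a_3\beta^2)$ and a real part $\beta^4-a_2\beta^2+a_0$; so the ``pinning'' $\beta^2=a_1/a_3$ is automatic, and the lemma is equivalent to the Routh--Hurwitz-type inequality $a_1^2-a_1a_2a_3+a_0a_3^2\neq 0$ for all admissible $k,I,\omega^*,A_1,A_2,A_3>0$. Your proposal neither verifies the claimed closed form for $a_1/a_3$ in the coupled case $A_i\neq A_j$ nor performs this final consistency check, and it does not address the degenerate case in which $\lambda^*$ has multiplicity two, where part of $N(L^*)$ lies in $V_\perp$ (so $a_0=0$) and the argument must be adjusted. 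As it stands, the proof has a genuine gap: the decisive algebraic verification is missing, and its validity across the whole parameter range is exactly what needs to be established.

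By contrast, the paper avoids this determinant computation altogether. It observes that a purely imaginary eigenvalue would produce a nontrivial periodic solution of the linearized system \eqref{eq:linearODEs}, and then combines the linearized energy and angular-momentum quadratic forms into the identity
\begin{equation*}
\frac12\frac{d}{dt}\Bigl[\norm{K}^2-\Lambda^*\bigl(\inner{\Omega}{J\Omega}+I\norm{\Omega_1}^2\bigr)\Bigr]=-k\Lambda^*\norm{\Omega-\Omega_1}^2,
\end{equation*}
which, integrated over a period, forces $\Omega\equiv\Omega_1$ and hence triviality after using the zero-average normalization. That argument is computation-free, involves no case distinctions on the multiplicity of $\lambda^*$ or the ordering of the $A_i$, and is the route you would be better served to follow unless you are prepared to complete the full Routh--Hurwitz verification your approach requires.
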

\begin{proof}
This proof requires verifying that in all cases we cannot have a purely imaginary eigenvalue. Given \eqref{eq:linearization}, an equivalent proof of the above statement is to show that the following linear system of ODEs 
\begin{equation}\label{eq:linearODEs}
\begin{split}
J\dot \Omega &= k(\Omega_1 - \Omega) - \Omega^*\times (J-\lambda^*\mathbf{E_0})\Omega
\\
I\dot \Omega_1 &= -k(\Omega_1 - \Omega) -I\Omega^* \times (\Omega_1-\Omega)
\end{split}\end{equation} 
does not admit any periodic solution other than the trivial one. Let us argue by contradiction, and assume that the above system of ODEs admits a nontrivial periodic solution $(\Omega, \Omega_1)$ with period $T>0$ (i.e., $(\Omega(t), \Omega_1(t))=(\Omega(t+T), \Omega_1(t+T))$ for every $t\in \mathbb R$). Without loss of generality, we will assume that 
\begin{equation}\label{eq:zeroaverage}
\int^T_0\Omega(t)\; dt=0,\qquad \int^T_0\Omega_1(t)\; dt=0.
\end{equation}
Let us denote $K:=J\Omega+I\Omega_1$. Adding the two equations in \eqref{eq:linearODEs} side by side, we find the following equation for $K$
\begin{equation}\label{eq:linearangular}
\dot{K}=-\Omega^*\times K+\Lambda^*\Omega^*\times\Omega 
\end{equation} 
where $\Lambda^*:=\lambda^*+I$. Let us take the dot product of \eqref{eq:linearODEs}$_{1}$ by $\Omega$, we obtain 
\begin{equation}\label{eq:linearenergy1}
\frac{d}{dt}\left(\frac 12 \inner{\Omega}{J\Omega}\right)=k\inner{\Omega_1-\Omega}{\Omega}+\inner{\Omega^*\times \Omega}{J\Omega}.
\end{equation}
Similarly, let us take the dot product of \eqref{eq:linearODEs}$_{2}$ by $\Omega_1$, we get 
\begin{equation}\label{eq:linearenergy2}
\frac{d}{dt}\left(\frac I2 \norm{\Omega_1}^2\right)=-k\inner{\Omega_1-\Omega}{\Omega_1}+\inner{\Omega^*\times \Omega}{I\Omega_1}.
\end{equation}
Finally, let us dot multiply \eqref{eq:linearangular} by $K$ to find 
\begin{equation}\label{eq:linearangular2}
\frac{d}{dt}\left(\frac 12 \norm{K}^2\right)=\Lambda^*\inner{\Omega^*\times \Omega}{K}.
\end{equation}
Take \eqref{eq:linearangular2} and subtract the sum of \eqref{eq:linearenergy1} and \eqref{eq:linearenergy2} multiplied by $\Lambda^*$, we obtain 
\[
\frac12\frac{d}{dt}\left[\norm{K}^2-\Lambda^*(\inner{\Omega}{ J\Omega}+I\norm{\Omega_1}^2)\right]=-k\Lambda^*\norm{\Omega-\Omega_1}^2.
\]
Integrating the latter displayed equation over a period $(t,t+T)$, we find that 
\[
\int^{t+T}_t\norm{\Omega(s)-\Omega_1(s)}^2\; ds=0
\]
for all $t\in \mathbb{R}$. Hence, $\Omega(t)=\Omega_1(t)$ for every $t$. Replacing the latter information in \eqref{eq:linearODEs}$_2$, we find that $\dot\Omega_1=0$, and thus $\Omega(t)=\Omega_1(t)\equiv 0$ by \eqref{eq:zeroaverage}. 
\end{proof}

\begin{lemma}
\label{semisimple}
The zero eigenvalue of $L^*$ is semi-simple.
\end{lemma}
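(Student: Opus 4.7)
The plan is to establish the nontrivial inclusion $N((L^*)^2)\subseteq N(L^*)$, the reverse inclusion being immediate. Take $v=(\Omega,\Omega_1)\in N((L^*)^2)$ and set $w:=L^*v\in N(L^*)$. By Lemma \ref{kernel}, $w=(U,U)$ for some $U\in \mathsf{eigen}_J(\lambda^*)$, and the goal is to show $U=0$.

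The key idea is to exploit the Jordan-chain condition dynamically. Since $L^*w=0$, the affine curve
\[
z(t):=v+tw=(\Omega+tU,\,\Omega_1+tU)
\]
satisfies $\dot z=w=L^*v+tL^*w=L^*z$, so it is an (at most) linearly growing solution of the linearized system \eqref{eq:linearODEs}. I then plan to apply to $z(t)$ the same identity established in the proof of Lemma \ref{noimag}, namely
\[
\frac{d}{dt}\left[\norm{\tilde K}^2-\Lambda^*\left(\inner{\tilde\Omega}{J\tilde\Omega}+I\norm{\tilde\Omega_1}^2\right)\right]=-2k\Lambda^*\norm{\tilde\Omega_1-\tilde\Omega}^2,
\]
where $\tilde K:=J\tilde\Omega+I\tilde\Omega_1$ and $\Lambda^*=\lambda^*+I>0$.

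The main computational step is to verify that the bracketed quantity, evaluated along $z(t)$, is in fact constant in $t$: using $JU=\lambda^*U$ one obtains $\tilde K(t)=K_0+t\Lambda^*U$, with $K_0:=J\Omega+I\Omega_1$, and a direct expansion shows that the linear- and quadratic-in-$t$ parts of $\norm{\tilde K(t)}^2$ exactly cancel against $\Lambda^*$ times those of $\inner{\tilde\Omega}{J\tilde\Omega}+I\norm{\tilde\Omega_1}^2$. The algebra relies on the identity $\lambda^*\inner{\Omega}{U}+I\inner{\Omega_1}{U}=\inner{K_0}{U}$, which follows from $U\in\mathsf{eigen}_J(\lambda^*)$ and the symmetry of $J$. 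Since $W(t):=\tilde\Omega_1(t)-\tilde\Omega(t)=\Omega_1-\Omega$ is also constant along $z(t)$, the identity above collapses to $0=-2k\Lambda^*\norm{\Omega_1-\Omega}^2$, forcing $\Omega=\Omega_1$.

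With $\Omega=\Omega_1$ in hand, the second row of $L^*v=(U,U)$ given by \eqref{eq:linearization} reduces to $U=0$, so $w=0$ and $v\in N(L^*)$, as required. The only real obstacle I foresee is the bookkeeping needed to check the cancellation of the growing terms, but this is routine once the above identity is noted; the conceptual point is to recognize that a nontrivial Jordan chain would produce a linearly growing linearized solution which is incompatible with the dissipative identity of Lemma \ref{noimag}.
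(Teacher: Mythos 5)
Your proof is correct, and it takes a genuinely different route from the paper's. The paper works statically: it writes out the chain equations $L^*w=v=(\tilde\Omega,\tilde\Omega)$ componentwise (after multiplying through by $J$ and $I$), derives a sequence of orthogonality relations ($\inner{\tilde\Omega}{\Omega^*}=0$, $\inner{\Omega_1-\Omega}{\Omega^*}=0$, $\inner{\tilde\Omega}{\Omega_1-\Omega}=0$, and a norm identity), and then combines an energy-type identity with the equation for $K=J\Omega+I\Omega_1$ dotted with $K$ to force $\Omega=\Omega_1$ and finally $\tilde\Omega=0$. You instead observe that a nontrivial Jordan chain produces the affine solution $z(t)=v+tw$ of the linearized system \eqref{eq:linearODEs}, and you re-use the dissipation identity already established in the proof of Lemma \ref{noimag} for the quantity $\norm{K}^2-\Lambda^*\bigl(\inner{\Omega}{J\Omega}+I\norm{\Omega_1}^2\bigr)$; your cancellation computation (which I checked: with $JU=\lambda^*U$ one gets $\tilde K(t)=K_0+t\Lambda^*U$ and $\inner{J\Omega}{U}+I\inner{\Omega_1}{U}=\inner{K_0}{U}$, so the $t$- and $t^2$-terms cancel exactly) shows this quantity is constant along $z(t)$, forcing $\norm{\Omega_1-\Omega}=0$, after which the second row of $L^*v=(U,U)$ gives $U=0$ directly — a slightly shorter endgame than the paper's use of its norm identity. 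The underlying algebra is of course the same combination of the angular-momentum and energy quadratic forms, but your packaging is more conceptual (a Jordan block would create a linearly growing mode incompatible with the Lyapunov-type identity), avoids re-deriving the orthogonality relations, and makes the link between Lemma \ref{noimag} and Lemma \ref{semisimple} explicit; the paper's version is self-contained at the level of the algebraic system and does not require interpreting generalized null vectors dynamically.
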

\begin{proof}
We only need to show that $N((L^*)^2)\subset N(L^*)$ as the converse inclusion is trivially satisfied. Let $w=(\Omega,\Omega_1)\in N((L^*)^2)$, and set $v=L^*w$. We want to show that $v=0$. Since $v\in N(L^*)$, by Lemma \ref{kernel}, $v=(\tilde\Omega,\tilde\Omega)$ for some $\tilde\Omega\in \mathsf{eigen}_J(\lambda^*)$. By \eqref{eq:linearization}, we have that 
\begin{equation}\label{eq:nulleq}\begin{split}
k(\Omega_1 - \Omega) - \Omega^*\times (J-\lambda^*\mathbf{E_0}\Omega)&=\lambda^*\tilde\Omega,
\\
-k(\Omega_1 - \Omega) -\Omega^* \times I(\Omega_1-\Omega)&=I\tilde\Omega.
\end{split}\end{equation}
Adding side by side the latter displayed equations, we find 
\begin{equation}\label{eq:nulleq2}
-\Omega^*\times K+\Lambda^*\Omega^*\times\Omega=\Lambda^*\tilde\Omega,
\end{equation}
where we recall that  $K=J\Omega+I\Omega_1$ and $\Lambda^*:=\lambda^*+I$. From \eqref{eq:nulleq2}, it immediately follows that 
\begin{equation}\label{eq:orthogonal1}
\inner{\tilde\Omega}{\Omega^*}=0.
\end{equation} 
Using the latter in \eqref{eq:nulleq}$_2$ dot multiplied by $\Omega^*$, we also get 
\begin{equation}\label{eq:orthogonal2}
\inner{\Omega_1-\Omega}{\Omega^*}=0.
\end{equation}
Let us take the cross product from the left of \eqref{eq:nulleq2} by $\Omega^*$, we find 
\[
\Lambda^*\Omega^*\times\tilde \Omega=-\Omega^*\times[\Omega^*\times(K-\Lambda^*\Omega)]=-[\Omega^*(K-\Lambda^*\Omega)]\Omega^*+\norm{\Omega^*}^2(K-\Lambda^*\Omega).
\]
Now, we dot multiply the latter displayed equality by $\tilde\Omega$ and use \eqref{eq:orthogonal1} together with the fact that $\lambda^*\tilde\Omega=J\tilde \Omega$ with $J$ a symmetric tensor, to get
\[\begin{split}
0&=\norm{\Omega^*}^2\inner{K-\Lambda^*\Omega}{\tilde\Omega}
=\norm{\Omega^*}^2\inner{J\Omega+I\Omega_1-\lambda^*\Omega-I\Omega}{\tilde\Omega}
\\
&=\norm{\Omega^*}^2(\inner{\Omega}{J\tilde\Omega}+I\inner{\Omega_1-\Omega}{\tilde\Omega}-\inner{\Omega}{J\tilde\Omega})
=\norm{\Omega^*}^2I\inner{\tilde\Omega}{\Omega_1-\Omega}.
\end{split}\]
From which, it follows that 
\begin{equation}\label{eq:orthogonal3}
\inner{\tilde\Omega}{\Omega_1-\Omega}=0.
\end{equation}
Let us take the dot product of \eqref{eq:nulleq}$_2$ by $\tilde \Omega$, using the  three orthogonality conditions \eqref{eq:orthogonal1}, \eqref{eq:orthogonal2} and \eqref{eq:orthogonal3}, we obtain that 
\begin{equation}\label{eq:orthogonal4}
\norm{\tilde\Omega}=\norm{\Omega^*}\norm{\Omega_1-\Omega}.
\end{equation}
It remains to show that $\norm{\Omega_1-\Omega}=0$. Let us take the dot product of \eqref{eq:nulleq}$_{1}$ by $\Omega$ and of \eqref{eq:nulleq}$_2$ by $\Omega_1$, respectively, we find the following two equations:
\[\begin{split}
&k\inner{\Omega_1-\Omega}{\Omega}+\inner{\Omega^*\times\Omega}{J\Omega}=\lambda^*\inner{\tilde\Omega}{\Omega},
\\
&-k\inner{\Omega_1-\Omega}{\Omega_1}+\inner{\Omega^*\times\Omega}{I\Omega_1}=I\inner{\tilde\Omega}{\Omega_1}.
\end{split}\]
Summing side by side the above equations, we obtain 
\begin{equation}\label{eq:nulleq3}
-k\norm{\Omega_1-\Omega}^2+\inner{\Omega^*\times\Omega}{K}=\inner{\tilde\Omega}{K}.
\end{equation}
On the other side, dot multiplying \eqref{eq:nulleq2} by $K$, we discover that 
\begin{equation}\label{eq:nulleq4}
\Lambda^*\inner{\Omega^*\times\Omega}{K}=\Lambda^*\inner{\tilde \Omega}{K}.
\end{equation}
From \eqref{eq:nulleq3} and \eqref{eq:nulleq4}, it immediately follows that $\norm{\Omega_1-\Omega}=0$, and this concludes our proof. 
\end{proof}

We are now ready to state and prove the main result about the spectral stability properties of the equilibria. 

\begin{theorem}
\label{pruss-theorem}
Let $(\Omega^*,\Omega_1^*)$ be an equilibrium point of \eqref{ODE} such that $\Omega^* = \Omega_1^*$ is an eigenvector of $J$ corresponding to $\lambda^*\in \{A_1,A_2,A_3\}$\footnote{Recall that we have denoted with  $A_1, A_2, A_3$ the eigenvalues of $J$, and we have assumed that $A_1 \leq A_2 \leq A_3$.}. Then the equilibrium  $(\Omega^*,\Omega_1^*)$ is normally stable if $\lambda^*=A_3$, otherwise it is normally hyperbolic.
\end{theorem}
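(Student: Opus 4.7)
The plan is to verify the four conditions of Definition \ref{def_norhyp} and then to determine the location of $\sigma(L^*)\setminus\{0\}$ in the complex plane as a function of $\lambda^*$. Conditions (iii) and (iv) are precisely the statements of Lemmas \ref{semisimple} and \ref{noimag}. Condition (i) follows from Remark \ref{equilibria-remark}: near the nonzero equilibrium $(\Omega^*,\Omega^*)$, the set $\mathcal{E}$ locally coincides with a single linear subspace of $\mathbb{R}^6$ of dimension $d:=\dim \mathsf{eigen}_J(\lambda^*)$, which is trivially a $C^1$-manifold. Condition (ii) then follows from Lemma \ref{kernel}, since the tangent space to this subspace at $(\Omega^*,\Omega^*)$ equals $\{(\Omega,\Omega):\Omega\in \mathsf{eigen}_J(\lambda^*)\}=N(L^*)$.

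To control $\sigma(L^*)\setminus\{0\}$, I would work in the body frame with $\Omega^*=\omega\mathsf{a}_*$, $J\mathsf{a}_*=\lambda^*\mathsf{a}_*$, $\omega>0$, and split each perturbation $(\Omega,\Omega_1)$ into its components parallel and orthogonal to $\mathsf{a}_*$. Since $\Omega^*\times u$ is orthogonal to $\mathsf{a}_*$ for every $u\in\mathbb{R}^3$, and $(J-\lambda^*\mathbf{E_0})\mathsf{a}_*=0$, the parallel components decouple from the orthogonal ones: they satisfy a $2\times 2$ linear system with eigenvalues $0$ and $-k(1/\lambda^*+1/I)<0$. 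The zero eigenvalue accounts for the diagonal direction $(\mathsf{a}_*,\mathsf{a}_*)\in N(L^*)$, while $-k(1/\lambda^*+1/I)$ lies safely in $\mathbb{C}^-$. The problem thus reduces to analyzing the $4\times 4$ block $M$ acting on the perturbations of $\Omega$ and $\Omega_1$ in the plane orthogonal to $\mathsf{a}_*$: I need to show $\sigma(M)\subset\mathbb{C}^-$ when $\lambda^*=A_3$, and $\sigma(M)\cap\mathbb{C}^+\neq\varnothing$ otherwise.

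For the spectral dichotomy on $M$, I would write $M$ explicitly in terms of the two remaining principal moments $\{A_i,A_j\}=\{A_1,A_2,A_3\}\setminus\{\lambda^*\}$ and of the signed quantities $\lambda^*-A_i$ and $\lambda^*-A_j$, then compute its characteristic polynomial and invoke the Routh--Hurwitz criterion. Under the standing order $A_1\leq A_2\leq A_3$, the sign conditions on the coefficients should reduce to $\lambda^*\geq A_i$ and $\lambda^*\geq A_j$, i.e., precisely $\lambda^*=A_3$. When $\lambda^*\ne A_3$, at least one Routh--Hurwitz inequality fails; combined with Lemma \ref{noimag} (no nonzero eigenvalue on $i\mathbb{R}$), this forces some eigenvalue of $M$ strictly into $\mathbb{C}^+$, while the $2\times 2$ block still delivers $-k(1/\lambda^*+1/I)\in\mathbb{C}^-$, yielding normal hyperbolicity. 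The main analytical obstacle is the Routh--Hurwitz bookkeeping itself, because the polynomial's coefficients couple $k$, $I$, $\omega$, and the two sign-carrying differences nontrivially. In line with the paper's ``geometrical'' remark, a cleaner alternative would be a homotopy argument: start from a configuration in which $\sigma(M)$ is explicitly computable (for instance $A_i=A_j=\lambda^*$, where the substitution $w=\Omega_1-\Omega$ diagonalizes the block and places $\sigma(M)\subset\mathbb{C}^-\cup\{0\}$), continuously vary the moments, and exploit continuity of the spectrum together with Lemma \ref{noimag} to argue that eigenvalues can enter $\mathbb{C}^+$ only by crossing through $0$, which in turn corresponds to algebraic degeneracies among the $A_i$'s that change $\dim N(L^*)$. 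Tracking those crossings as the ordering among the $A_i$'s is varied then pinpoints the stability/hyperbolicity boundary exactly at $\lambda^*=A_3$.
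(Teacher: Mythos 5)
Your verification of conditions (i)--(iv) of Definition \ref{def_norhyp} matches the paper's, and your block decomposition is correct and even a nice simplification: since $\Omega^*\times u\perp\mathsf{a}_*$ and $(J-\lambda^*\mathbf{E_0})\mathsf{a}_*=0$, the components along $\mathsf{a}_*$ do close into the $2\times2$ system with eigenvalues $0$ and $-k(1/\lambda^*+1/I)$, and the orthogonal components close into a $4\times4$ block $M$; this also cleanly supplies the $\mathbb{C}^-$ part required for normal hyperbolicity. The problem is that the actual heart of the theorem --- locating $\sigma(M)$ as a function of $\lambda^*$ --- is left as a plan, not a proof. The Routh--Hurwitz route is announced but not executed: you never compute the characteristic polynomial, and the assertion that the Hurwitz conditions ``should reduce to $\lambda^*\ge A_i$ and $\lambda^*\ge A_j$'' is exactly the claim to be proved; the paper instead proves the stable case $\lambda^*=A_3$ directly, by pairing the complexified eigenvalue equations so that $\mathsf{Re}(\lambda)$ multiplies the quadratic form $\Lambda^*\inner{J\Omega}{\overline\Omega}+\Lambda^*I\norm{\Omega_1}_{\mathbb{C}^3}^2-\norm{K}_{\mathbb{C}^3}$, which is shown to be positive precisely because $A_3$ is the largest moment. (Also note that in the degenerate cases $A_1=A_2$ or $A_2=A_3$ your block $M$ itself contains zero eigenvalues, so ``Routh--Hurwitz fails'' would not by itself yield an eigenvalue in $\mathbb{C}^+$ without first factoring out the known kernel.)

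The homotopy alternative has a genuine logical gap at its crucial step. Continuity of the spectrum plus Lemma \ref{noimag} only tells you that the number of eigenvalues of $M$ in $\mathbb{C}^+$ is constant between degeneracies and can change only when an eigenvalue passes through $0$; it does not tell you on which side of the imaginary axis an eigenvalue lands after touching $0$. Starting from $A_i=A_j=\lambda^*$, the double zero eigenvalue of $M$ could, as far as continuity is concerned, split back into $\mathbb{C}^-$ when the moments are separated, in which case no instability would ever be detected. What decides the direction is a transversality (crossing-speed) computation, and this is precisely what the paper supplies: it parameterizes the eigenvalue branch $\lambda(\mu)$ through the degeneracy via the Implicit Function Theorem applied to the map $G$ in \eqref{eq:implicit_G}, and computes $\dot\lambda(\mu_3)=k/I^2+(\alpha^*)^2I/k>0$, so the branch exits into $\mathbb{C}^+$ with positive speed; iterating this argument also yields the eigenvalue counts (F1)--(F2). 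Without either that derivative computation (or some substitute, e.g.\ a sign argument on $\det M$) for the hyperbolic case, and without an actual proof of the stable case to anchor the continuation, your outline identifies the right boundary $\lambda^*=A_3$ but does not establish it.
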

\begin{proof}
From Lemma \ref{semisimple} we know that 0 is a semi-simple eigenvalue. We now verify that for an equilibrium $(\Omega^*,\Omega_1^*)$, the set of equilibria in a neighbourhood of that point forms a $C^1$ manifold, and that the tangent space of said manifold at $(\Omega^*,\Omega_1^*)$ is equal to the null space of the linearization at that point.

In Theorem \ref{equilibria} and Remark \ref{equilibria-remark}, we saw that a given equilibrium $(\Omega^*,\Omega_1^*) =: z^*$ lies along a subspace of either 1, 2, or 3 dimensions. Without loss of generality, assume $z^*$ is along the 1-dimensional subspace $\text{span}\{\eu_3 + \eu_6\}$. We define an open set $U = \mathbb{R}$ and a $C^1$ function $\Psi: U \rightarrow \mathbb{R}^6$ by: $$\Psi(x) = x(\eu_3 + \eu_6) + z^*,\quad x\in \mathbb{R}.$$

Then we can directly observe that $\Psi(U) \subset \mathcal{E}, \Psi(0) = z^*$. We can also compute the derivative of $\Psi$ at $0$ to be $\Psi'(0) = \eu_3 + \eu_6$, and hence $\text{rank}(\Psi'(0)) = 1$. This verifies that near $z^*$, the set of equilibria form a 1-dimensional $C^1$ manifold. $\Psi$ defines a parameterization of points along this manifold, and the tangent space at $z^*$ is simply the span of $\Psi'(0)$: $$T_{z^*}\mathcal{E} = \text{span}\{\eu_3 + \eu_6\}.$$ 

We recognize from Lemma \ref{kernel} that the above subspace is the null space of our linearization when taken about $z^* \in \text{span}\{\eu_3 + \eu_6\}$. 
Similar constructions can be used to verify the same steps in the higher dimensional cases. The remainder of the proof is then devoted to characterize the location of the spectrum of $L^*$. 

Let us assume that $\lambda^*=A_3$, and let $\lambda\in \sigma(L^*)$, $\lambda\ne 0$. We will show that $\mathsf{Re }(\lambda)<0$. Let $(\Omega,\Omega_1)\in \mathbb{C}^6$ be the eigenvector of $L^*$ corresponding to $\lambda$. By \eqref{eq:linearization}, $\lambda$ and $(\Omega,\Omega_1)$ satisfy the following algebraic system of equations 
\begin{equation}\label{eq:eigenvalue_p}
\left\{\begin{aligned}
&k(\Omega_1 - \Omega) - \Omega^*\times (J-\lambda^*\mathbf{E_0})\Omega=\lambda J\Omega,
\\
& -k(\Omega_1 - \Omega) -\Omega^* \times I(\Omega_1-\Omega)=\lambda I\Omega_1.
\end{aligned}\right.
\end{equation}
Taking the inner product in $\mathbb{C}^3$ of \eqref{eq:eigenvalue_p}$_2$ by $\Omega_1$, we obtain
\begin{equation}\label{eq:eigen4}
-k\inner{\Omega_1-\Omega}{\overline{\Omega_1}}+2iI\inner{\mathsf{Re}(\Omega_1)\times\mathsf{Im}(\Omega_1)}{\Omega^*}
+I\inner{\Omega\times\overline{\Omega_1}}{\Omega^*}=\lambda I\norm{\Omega_1}_{\mathbb{C}^3}^2.
\end{equation}
Now, take the inner product in $\mathbb{C}^3$ of \eqref{eq:eigenvalue_p}$_2$ by $\Omega$, we get 
\begin{equation}\label{eq:eigen5}
-k\inner{\Omega_1-\Omega}{\overline{\Omega}}-2iI\inner{\mathsf{Re}(\Omega)\times\mathsf{Im}(\Omega)}{\Omega^*}
+I\inner{\overline{\Omega}\times\Omega_1}{\Omega^*}=\lambda I \inner{\Omega_1}{\overline\Omega}. 
\end{equation}
Subtract \eqref{eq:eigen5} from \eqref{eq:eigen4}, and take the real part of the resulting equation, we find that 
\begin{equation}\label{eq:eigen6}
\mathsf{Re}(\lambda)\norm{\Omega_1}_{\mathbb{C}^3}^2-\mathsf{Re}[\lambda\inner{\Omega_1}{\overline{\Omega}}]=-\frac kI \norm{\Omega-\Omega_1}_{\mathbb{C}^3}^2.
\end{equation}
Adding side by side the equations in \eqref{eq:eigenvalue_p}, we find that the vector  $K=J\Omega+I\Omega_1$ satisfies the following equation 
\begin{equation}\label{eq:eigen7}
\lambda K=-\Omega^*\times K+\Lambda^*\Omega^*\times \Omega,
\end{equation}
where $\Lambda^*=A_3+I$ in this case. Let us consider the inner product in $\mathbb{C}^3$ of \eqref{eq:eigen7} by $\Omega$, we obtain 
\begin{equation}\label{eq:eigen8}
\lambda \inner{K}{\overline\Omega}=\inner{\overline{\Omega}\times K}{\Omega^*}+\Lambda^*\inner{\Omega\times\overline{\Omega}}{\Omega^*}.
\end{equation}
Let us take the inner product in $\mathbb{C}^3$ of \eqref{eq:eigen7} by $K$, we get 
\begin{equation}\label{eq:eigen9}
\lambda\norm{K}_{\mathbb{C}^3}=-\inner{K\times\overline K}{\Omega^*}+\Lambda^*\inner{\Omega\times\overline{K}}{\Omega^*}.
\end{equation}
We now multiply both sides of \eqref{eq:eigen8} by $\Lambda^*$, subtract this new equation from \eqref{eq:eigen9} and take the real part of what we obtained to find the following important equality
\begin{equation*}
\mathsf{Re}(\lambda)\norm{K}_{\mathbb{C}^3}=\Lambda^*\mathsf{Re}(\lambda)\inner{J\Omega}{\overline\Omega}+\Lambda^*I\mathsf{Re}[\lambda \inner{\Omega_1}{\overline\Omega}].
\end{equation*}
Using \eqref{eq:eigen6} in the last term of the latter displayed equality, we get 
\[
\mathsf{Re}(\lambda)[\Lambda^*\inner{J\Omega}{\overline\Omega}+\Lambda^*I\norm{\Omega_1}_{\mathbb{C}^3}^2-\norm{K}_{\mathbb{C}^3}]=-k\Lambda^*\norm{\Omega-\Omega_1}.
\]
We notice that the term in the squared parentheses is a real number , it remains to show that it is positive which is what we will prove next. Writing $\Omega=(p,q,r)^T\in \mathbb{C}^3$ and $\Omega_1=(p_1,q_1,r_1)^T\in \mathbb{C}^3$, we can then write more explicitly
\[\begin{split}
\Lambda^*\inner{J\Omega}{\overline\Omega}+\Lambda^*I\norm{\Omega_1}_{\mathbb{C}^3}^2&-\norm{K}_{\mathbb{C}^3}
\\
=&I\left(A_1\norm{p-p_1}_{\mathbb C}^2+A_2\norm{q-q_1}_{\mathbb C}^2+A_3\norm{q-q_1}_{\mathbb C}^2\right)
\\
&(A_3-A_1)(A_1+I)\norm{p}_{\mathbb C}^2+(A_3-A_2)(A_2+I)\norm{q}_{\mathbb C}^2.
\end{split}
\]
It is then clear that the right-hand side of the latter equality is positive since $I>0$ and $A_3\ge A_2\ge A_1>0$ by assumption. 

Let us now show that if $\lambda^*\in\{A_1,A_2\}$, then $\sigma(L^*)\cap \mathbb{C}^+ \neq \varnothing$. Note that the conditions (on the spectrum) characterizing normally stable and normally hyperbolic equilibria are not mutually exclusive. In particular, normally hyperbolic equilibria require both a stable and an unstable part of the spectrum of the linearization (see Definition \ref{def_norhyp}). We will in fact prove more than what is required, we will demonstrate the following two important facts: 
\begin{enumerate}
\item[(F1)] If $A_1\le \lambda^*=A_2<A_3$, then $L^*$ has only one positive eigenvalue.
\item[(F2)] If $\lambda^*=A_1<A_2\le A_3$, then $L^*$ has only two eigenvalues having positive real part.
\end{enumerate}
Before proving properties (F1) and (F2), let us make some observations. Let us consider the diagonal tensor $J_\mu$ (in the basis $\mathsf{a}_1$, $\mathsf{a}_2$ and $\mathsf{a}_3$ of the eigenvectors of $J$)
\begin{equation}\label{eq:J_mu}
J_\mu:=\left[\begin{matrix}
\mu_1 & 0 & 0
\\
0 & \mu & 0
\\
0 & 0 & \mu_3\end{matrix}\right],
\end{equation}
with $0<\mu_1<\mu,\mu_3\in \mathbb{R}$, and assume that $\lambda^*=\mu_3$. Let us denote with $L(\mu)$ the linear transformation defined in equation \eqref{eq:linearization} with $J$ replaced by $J_\mu$. From the above calculations, Lemma \ref{kernel} and Lemma \ref{semisimple}, we have the following two cases:
\begin{enumerate}
\item[(i)] If $\mu_1<\mu<\mu_3$, then $\lambda=0$ is an eigenvalue of $L(\mu)$ with multiplicity one and corresponding eigenvector $(\Omega^*,\Omega^*)$, where $\Omega^*\in \text{span}\{\mathsf{a}_3\}$. All the other eigenvalues of $L(\mu)$ have negative real part. 
\item[(ii)] If $\mu_1<\mu=\mu_3$, then $\lambda=0$ is an eigenvalue of $L(\mu)$ with multiplicity two and corresponding eigenvector $(\Omega^*,\Omega^*)$, where $\Omega^*\in \text{span}\{\mathsf{a}_2,\mathsf{a}_3\}$. All the other eigenvalues of $L(\mu)$ have negative real part. 
\end{enumerate}
The above remarks suggest that a nonzero eigenvalue $\lambda$ of $L(\mu)$ goes from having $\mathsf{Re}(\lambda)<0$ to be identically zero as $\mu\nearrow \mu_3$. We will then prove that $\sigma(L(\mu))$ crosses the imaginary axis with positive speed as as $\mu\nearrow \mu_3$, and this would be enough to conclude the existence of eigenvalues with positive real part when $\mu_3<\mu$. 

Let us start with the proof of property (F1). Assume that $\lambda^*=\mu_3$, and let $\Omega^*=\Omega_1^*=\alpha^*\,\mathsf{a}_3$ for some $\alpha^*\in \mathbb{R}\setminus\{0\}$. Let us consider the eigenvalue problem, in the new variables $(\lambda(\mu),\Omega(\mu),\Omega_1(\mu))$, obtained from \eqref{eq:eigenvalue_p} by replacing $J$ with $J_\mu$, defined in \eqref{eq:J_mu}: 
\begin{equation}\label{eq:eigenvalue_mu}
\left\{\begin{aligned}
&k(\Omega_1(\mu) - \Omega(\mu)) - \alpha^*\,\mathsf{a}_3\times (J_\mu-\lambda^*\mathbf{E_0})\Omega(\mu)&&=\lambda(\mu) J_\mu\Omega(\mu),
\\
& -k(\Omega_1(\mu) - \Omega(\mu)) -\alpha^*\,\mathsf{a}_3 \times I(\Omega_1(\mu)-\Omega(\mu))&&=\lambda(\mu) I\Omega_1(\mu).
\end{aligned}\right.
\end{equation} 
We can rewrite the above linear algebraic system in the compact form 
\begin{equation}\label{eq:eigenvalue_mu_c}
(L(\mu)-\lambda(\mu))(\Omega(\mu),\Omega_1(\mu))
=0_{\,\mathbb{R}^6}. 
\end{equation}
Consider the map
\begin{equation}\label{eq:implicit_G}
\begin{split}
(\mu,(\lambda, \Omega,\Omega_1))&\in (\mu_3-\delta,\mu_3+\delta)\times(\mathbb{R}\times\mathbb{R}^3\times\mathbb{R}^3)
\\
\mapsto G(\mu,(\lambda,\Omega,\Omega_1))& :=
((L(\mu)-\lambda)(\Omega,\Omega_1),\inner{\Omega}{\mathsf{a}_3},\norm{\Omega}^2-1)\in \mathbb{R}^3\times\mathbb{R}\times\mathbb{R}. 
\end{split}
\end{equation}
Note that $G$ has continuous partial derivatives and $G(\mu_3,(0,\mathsf{a}_2,\mathsf{a}_2))=0_{\,\mathbb{R}^5}$. We will show that, if $\delta$ is sufficiently small, for each $\mu\in (\mu_3-\delta,\mu_3+\delta)$ there exist (unique) $(\lambda(\mu),\Omega(\mu),\Omega_1(\mu))$ such that $G(\mu,(\lambda(\mu),\Omega(\mu),\Omega_1(\mu)))=0_{\,\mathbb{R}^5}$. We will use the Implicit Function Theorem. To this end, we need to show that the Fr\'echet derivative  of $G$ with respect to $h\equiv (\lambda, \Omega,\Omega_1)$ at $(\mu_3,(0,\mathsf{a}_2,\mathsf{a}_2))$, given by 
\[
D_hG(\mu_3,(0,\mathsf{a}_2,\mathsf{a}_2))[\hat{\lambda},\hat{\Omega},\hat{\Omega}_1]=(L(\mu_3)(\hat{\Omega},\hat{\Omega}_1)-\hat{\lambda}(\mathsf{a}_2,\mathsf{a}_2),\inner{\hat{\Omega}}{\mathsf{a}_3},2\inner{\hat{\Omega}}{\mathsf{a}_2}),
\]
is invertible. As $D_hG(\mu_3,(0,\mathsf{a}_2,\mathsf{a}_2))$ is itself a linear transformation, to show that it is invertible, it is enough that it is injective. Assume that $D_hG(\mu_3,(0,\mathsf{a}_2,\mathsf{a}_2))[\hat{\lambda},\hat{\Omega},\hat{\Omega}_1]=0_{\,\mathbb{R}^5}$, that is 
\begin{equation}\label{eq:G=0}
\begin{split}
L(\mu_3)\left[\begin{matrix}
\hat \Omega
\\
\hat{\Omega}_1\end{matrix}\right]
&=\hat\lambda\left[\begin{matrix}
\mathsf{a}_2
\\
\mathsf{a}_2\end{matrix}\right],
\\
\inner{\hat{\Omega}}{\mathsf{a}_3}&=0
\\
\inner{\hat{\Omega}}{\mathsf{a}_2}&=0
\end{split}
\end{equation}
we will show that $(\hat{\lambda},\hat{\Omega},\hat{\Omega}_1)=(0,0_{\mathbb{R}^3},0_{\mathbb{R}^3})$. Note that, since $\mu=\mu_3$, by Lemma \ref{kernel}
\[
\hat\lambda(\mathsf{a}_2,\mathsf{a}_2)\in N(L(\mu_3))=\{v=(\Omega,\Omega_1)\in \mathcal E:\; \Omega=\Omega_1\in \mathsf{eigen}_{J_{\mu_3}}(\lambda^*)\equiv\text{span}\{\mathsf{a}_2,\mathsf{a}_3\}\}.
\] 
Since $0$ is a semi-simple eigenvalue (Lemma \ref{semisimple}), we then have that 
$(\hat\Omega,\hat{\Omega}_1)\in N(L^2(\mu_3))=N(L(\mu_3))$, implying that $
\hat\Omega=\hat{\Omega}_1=\hat q\mathsf{a}_2+\hat r\mathsf{a}_3$ 
for some $\hat q,\hat r\in \mathbb{R}$. From the last two equations in \eqref{eq:G=0}, we can the conclude that $\hat q=\hat r=0$, which in turn implies that also $\hat \lambda=0$ by \eqref{eq:G=0}$_1$. By the Implicit Function Theorem, there exists $\delta>0$ such that for each $\mu\in (\mu_3-\delta,\mu_3+\delta)$ there exist (unique) $(\lambda(\mu),\Omega(\mu),\Omega_1(\mu))$ such that $G(\mu,(\lambda(\mu),\Omega(\mu),\Omega_1(\mu)))=0_{\,\mathbb{R}^5}$, that is, the triple $(\lambda(\mu),\Omega(\mu),\Omega_1(\mu))$ satisfies \eqref{eq:eigenvalue_mu} with $\Omega(\mu)$ such that 
\begin{equation}\label{eq:constraint_mu}
\inner{\Omega(\mu)}{\mathsf{a}_3}=0,\qquad \norm{\Omega(\mu)}=1.
\end{equation}
Note that, again from the Implicit Function Theorem, $(\lambda(\mu),\Omega(\mu),\Omega_1(\mu))=g(\mu)$ where $g:\; (\mu_3-\delta,\mu_3+\delta)\to \mathbb{R}\times \mathbb{R}^3\times \mathbb{R}^3$ is a continuously differentiable function with $g(\mu_3)=(0,\mathsf{a}_2,\mathsf{a}_2)$. Adding side by side the equations in \eqref{eq:eigenvalue_mu}, we find that the vector field $K(\mu):=J_\mu\Omega(\mu)+I\Omega_1(\mu)$ satisfies the algebraic equation 
\[
-\alpha^*\mathsf{a}_3\times K(\mu)+\alpha^*\Lambda^*\mathsf{a}_3\times\Omega(\mu)=\lambda(\mu)K(\mu),
\]
where $\Lambda^*=\lambda^*+I\equiv\mu_3+I$. Let us differentiate both sides of the above equation with respect to $\mu$ and evaluate the resulting equation at $\mu_3$, we find 
\begin{equation}\label{eq:mu_III}
\alpha^*\mathsf{a}_1-\alpha^*\mathsf{a}_3\times(J_{\mu_3}-\mu_3\mathbf{E}_0)\dot \Omega(\mu_3)-\alpha^*\mathsf{a}_3\times I(\dot\Omega_1(\mu_3)-\dot\Omega(\mu_3))=\dot \lambda(\mu_3)\Lambda^*\mathsf{a}_2.
\end{equation}
In the above, the dot `` $\dot{}$ '' denotes the differentiation with respect to the parameter $\mu$. Note also that 
\[
(J_{\mu_3}-\mu_3\mathbf{E}_0)\dot \Omega(\mu_3)=(\mu_1-\mu_3)\inner{\dot\Omega(\mu_3)}{\mathsf{a}_1}\mathsf{a}_1,
\]
and recall that $\{\mathsf{a}_1,\mathsf{a}_2,\mathsf{a}_3\}$ are eigenvectors of $J$ chosen to form an orthonormal basis of $\mathbb{R}^3$. So, \eqref{eq:mu_III}  implies in particular that 
\begin{equation}\label{eq:mu_V}
\inner{\dot\Omega_1(\mu_3)-\dot\Omega(\mu_3)}{\mathsf{a}_2}=-I^{-1}.
\end{equation}
Now, let us differentiate both sides of \eqref{eq:eigenvalue_mu}$_2$ with respect to $\mu$ and evaluate the resulting equation at $\mu_3$, we obtain 
\begin{equation}\label{eq:mu_VI}
-k(\dot\Omega_1(\mu_3) - \dot\Omega(\mu_3)) -\alpha^*\,\mathsf{a}_3 \times I(\dot\Omega_1(\mu_3)-\dot\Omega(\mu_3))=\dot\lambda(\mu_3) I\mathsf{a}_2.
\end{equation}
Using \eqref{eq:mu_V}, it follows that 
\[
\inner{\dot\Omega_1(\mu_3) - \dot\Omega(\mu_3)}{\mathsf{a}_1} =-\frac{\alpha^*}{k},
\]
and then 
\[
\dot\lambda(\mu_3)=\frac{k}{I^2}+\frac{(\alpha^*)^2I}{k}>0. 
\]
Therefore, the eigenvalue $\lambda(\mu)$ crosses the imaginary axis with positive speed along the real axis when $\mu=\mu_3$. By a standard perturbation argument for simple eigenvalues (see, e.g., \cite[Theorem 3.2 \& Remark 3.4]{Crandall1973}), we also infer that $\lambda(\mu)$ exists for every $\mu>\mu_3$. In addition, $\lambda(\mu)>0$ for $\mu>\mu_3$ thanks to the first part of this proof. 

Now set $\mu_1=A_1$, $\mu_3=A_2$ and $\mu=A_3$. The above argument shows that the equilibrium $(\Omega^*,\Omega_1^*)$ with $\Omega^*=\Omega_1^*\in \mathsf{eigen}_J(\lambda^*\equiv A_2)$ is unstable since there is exactly one positive eigenvalue given by $\lambda(\mu=A_3)$. 

Now assume that  
\begin{equation}\label{eq:J_mu2}
J_\mu:=\left[\begin{matrix}
\mu & 0 & 0
\\
0 & \mu_2 & 0
\\
0 & 0 & \mu_3\end{matrix}\right],
\end{equation}
with $\mu_2>\mu_3$ and $\mu\in (\mu_3-\delta,\mu_3+\delta)$. Let us consider the equilibrium $(\Omega^*,\Omega_1^*)$ with $\Omega^*=\Omega_1^*=\alpha\mathsf{a}_3$. Thanks to what we have proved above, we can conclude that $L(\mu)$ has $\lambda=0$ as simple eigenvalue, one positive eigenvalue and all the other eigenvalues with negative real part as long as $\mu<\mu_3$. Note, in particular, that $\lambda=0$ becomes an eigenvalue of multiplicity two as soon as $\mu=\mu_3$. This shows that, also in the case $\mu=\mu_3=A_1=A_2<\mu_2=A_3$, there is exactly one positive eigenvalue of $L(\mu)$. We are now ready to prove property (F2), that is the existence of two eigenvalues of $L(\mu)$\footnote{This is the linear transformation defined in \eqref{eq:linearization} with $J$ replaced by $J_\mu$ in \eqref{eq:J_mu2}. } with positive real part whenever $\mu>\mu_3$.  In fact, we can redo the argument with the Implicit Function Theorem with $\mathsf{a}_2$ replaced by $\mathsf{a}_1$, since now the map $G$ (defined in \eqref{eq:implicit_G}) satisfies $G(\mu_3,(0,\mathsf{a}_1,\mathsf{a}_1))=0_{\,\mathbb{R}^5}$. Therefore, since $\mu_2>\mu_3$, there will be two eigenvalues with positive real part for any $\mu>\mu_3$. 

Take now $\lambda^*=\mu_3=A_1<\mu=A_2< \mu_2=A_3$. We have just showed that there will be two eigenvalues of $L(\mu=A_2)$ with positive real part. In addition $\lambda=0$ is a simple eigenvalue and all remaining eigenvalues have negative real part. Note that this assertion still holds true in the case $\lambda^*=\mu_3=A_1<\mu= \mu_2=A_2=A_3$. The proof is then complete since $\sigma(L^*)\equiv \sigma(L(\mu))$ for the above choices of $\mu$.
\end{proof}

Thanks to the above result and Theorems \ref{normalstable} \& \ref{normalhyper}, we immediately have the following result concerning the nonlinear stability properties of the equilibria of \eqref{ODE}. 

\begin{theorem}\label{th:nonlinear_stability}
Let $(\Omega^*,\Omega_1^*)$ be an equilibrium point of \eqref{ODE} such that $\Omega^* = \Omega_1^*$ is an eigenvector of $J$ corresponding to $\lambda^*\in \{A_1,A_2,A_3\}$, and recall that $A_1\le A_2\le A_3$. Then, the equilibrium  $(\Omega^*,\Omega_1^*)$ is exponentially stable\footnote{``Exponential stability'' should be read in the sense of Theorem \ref{normalstable}. } for the nonlinear equation \eqref{ODE} if $\lambda^*=A_3$. 

If $\lambda^*\in \{A_1,A_2\}$ with $A_1\le A_2<A_3$, then $(\Omega^*,\Omega_1^*)$ is unstable.
\end{theorem}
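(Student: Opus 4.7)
The plan is to recognize this theorem as essentially a packaging of the spectral characterization of equilibria (Theorem \ref{pruss-theorem}) with the abstract linearization principles for normally stable and normally hyperbolic equilibria (Theorems \ref{normalstable} and \ref{normalhyper}). The substantive analytical work has already been done: the kernel identification in Lemma \ref{kernel}, the semi-simplicity of the zero eigenvalue in Lemma \ref{semisimple}, the absence of nonzero purely imaginary eigenvalues in Lemma \ref{noimag}, and the manifold/tangent-space verification carried out at the beginning of the proof of Theorem \ref{pruss-theorem} (together with the explicit parameterizations of $\mathcal{E}$ from Remark \ref{equilibria-remark}) ensure that every nonzero equilibrium of \eqref{ODE} satisfies all four structural hypotheses in Definition \ref{def_norhyp}. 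Therefore the hypotheses needed to invoke either Theorem \ref{normalstable} or Theorem \ref{normalhyper} are automatically in force.

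With this in place, I would split according to the value of $\lambda^*$. If $\lambda^* = A_3$, Theorem \ref{pruss-theorem} asserts that $(\Omega^*,\Omega_1^*)$ is normally stable; a direct application of Theorem \ref{normalstable} then yields stability in the sense of Lyapunov together with the existence of $\delta>0$ such that every solution with $z_0 \in B_\delta(z^*)$ is global and satisfies $\norm{z(t)-z_\infty}\le c_1 e^{-c_2 t}$ for some $z_\infty \in \mathcal{E}$ and constants $c_1,c_2>0$. This is precisely the exponential stability statement of the theorem.

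If instead $\lambda^* \in \{A_1,A_2\}$ with the strict ordering $A_1 \le A_2 < A_3$, then properties (F1)--(F2) established in the proof of Theorem \ref{pruss-theorem} supply at least one eigenvalue of $L^*$ with positive real part, while we already know $\sigma(L^*)\cap \mathbb{C}^-\ne \varnothing$ (in fact, the remaining nonzero eigenvalues have negative real part). Hence $(\Omega^*,\Omega_1^*)$ is normally hyperbolic, and Theorem \ref{normalhyper} yields instability in the sense of Lyapunov, completing the argument.

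No new obstacle arises at this level: the entire difficulty of the stability analysis lives inside Theorem \ref{pruss-theorem}, whose spectral perturbation argument (the Implicit Function Theorem step tracking how the eigenvalue $\lambda(\mu)$ crosses the imaginary axis with positive speed as $\mu$ passes through $\mu_3$) is what makes the present corollary sharp. The only care point in writing the proof is the strict inequality $A_2<A_3$ in the unstable case, which is what prevents $\lambda^*=A_2$ from being simultaneously the largest principal moment of inertia and thereby landing in the normally stable regime.
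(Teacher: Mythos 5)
Your proposal is correct and follows essentially the same route as the paper, which deduces this theorem immediately by combining the spectral characterization in Theorem \ref{pruss-theorem} (normally stable when $\lambda^*=A_3$, normally hyperbolic when $\lambda^*\in\{A_1,A_2\}$ with $A_2<A_3$) with the linearization principles of Theorems \ref{normalstable} and \ref{normalhyper}. Your additional remarks on where the structural hypotheses of Definition \ref{def_norhyp} are verified and on the role of the strict inequality $A_2<A_3$ are accurate and consistent with the paper's argument.
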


\section{Long-time behaviour of solutions}\label{sec:long-time}
\subsection{``Guaranteed'' convergence}
In this section, we extend the (nonlinear) stability results to conclude that \textbf{all} solutions to \eqref{ODE} converge to a permanent rotation about a principal axis of inertia of the outer body. Let $(\Omega(t), \Omega_1(t))$ denote a solution to \eqref{ODE} corresponding to the ``generic'' initial condition $(\Omega_0,\Omega_{10})\in \mathbb{R}^3\times\mathbb{R}^3$. Note that we are considering initial data not necessarily close to an equilibrium. 

We start by applying Lemma \ref{gronwall} to the balance of kinetic energy  \eqref{tderiv-kin} to obtain a decay for the relative velocity $\Omega -\Omega_1$.

\begin{theorem}
\label{gronwall-app}
Assume that $k > 0$. The relative velocity $\Omega(t) - \Omega_1(t)$ converges to $0$ as $t \rightarrow \infty$.
\end{theorem}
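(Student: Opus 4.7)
The plan is to apply Lemma \ref{gronwall} to the scalar function $y(t) := \norm{\Omega(t)-\Omega_1(t)}^2$. First, integrating the kinetic energy balance \eqref{tderiv-kin} from $0$ to $\infty$ and using $V \geq 0$ yields
\[
k\int_0^\infty \norm{\Omega - \Omega_1}^2\, dt \leq V(\Omega_0,\Omega_{10}),
\]
so $y \in L^1(0,\infty)$. The same monotonicity of $V$ together with positive definiteness of $J$ bounds $\norm{\Omega}$ and $\norm{\Omega_1}$ uniformly in $t$, so in particular $y \in L^\infty(0,\infty)$.

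Next, I would differentiate $y$ along the trajectory. Setting $u := \Omega_1 - \Omega$, the equations \eqref{ODE} give
\[
\dot u = -\tfrac{k}{I}u - \Omega\times\Omega_1 - kJ^{-1}u + J^{-1}(\Omega\times J\Omega).
\]
Two observations collapse the middle cross-product term: $\Omega\times\Omega_1 = \Omega\times u$ (since $\Omega\times\Omega=0$), and $\inner{u}{\Omega\times u} = 0$ by antisymmetry. The resulting identity is
\[
\tfrac{1}{2}\dot y = -\tfrac{k}{I}\norm{u}^2 - k\inner{u}{J^{-1}u} + \inner{u}{J^{-1}(\Omega\times J\Omega)}.
\]
The first two terms bound from above by $-\tfrac12 Cy$ with $C := 2k(1/I + 1/A_3) > 0$, using that $1/A_3$ is the smallest eigenvalue of $J^{-1}$. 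The remaining term is estimated by Cauchy--Schwarz and operator-norm bounds as $|\inner{u}{J^{-1}(\Omega\times J\Omega)}| \leq (A_3/A_1)\norm{u}\norm{\Omega}^2 \leq M\norm{u}$, where $M$ depends only on the $L^\infty$ bound on $\norm{\Omega}$.

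These estimates produce the differential inequality $\dot y \leq -Cy + F(t)$ with $F(t) := 2M\norm{\Omega(t)-\Omega_1(t)} \geq 0$. Because $\norm{u}^2 \in L^1(0,\infty)$ by the first step, we have $F \in L^2(0,\infty) \cap L^1_{\mathrm{loc}}(0,\infty)$, and all hypotheses of Lemma \ref{gronwall} (with $q=2$) are verified; the lemma then yields $y(t)\to 0$ as $t\to\infty$, which is the claim. The main obstacle is choosing a clean bound for the nonlinear Euler term $J^{-1}(\Omega\times J\Omega)$: a naive Young's inequality on $\norm{u}\norm{\Omega}^2$ would produce a remainder proportional to $\norm{\Omega}^4$, which is bounded but fails to lie in any $L^q$ with finite $q$. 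Keeping a single factor of $\norm{u}$ in $F$ instead allows the integrability $\norm{u}^2 \in L^1$, which comes for free from the kinetic-energy dissipation, to transfer to $F \in L^2$ --- exactly the form Lemma \ref{gronwall} requires.
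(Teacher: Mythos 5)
Your argument is correct and is essentially the paper's own proof: both reduce to the differential inequality $\dot y \leq -Cy + F$ for $y$ proportional to $\norm{\Omega-\Omega_1}^2$, bound the Euler term $J^{-1}(\Omega\times J\Omega)$ by a constant times $\norm{\Omega-\Omega_1}$ using the uniform bound on $\norm{\Omega}$, deduce $F\in L^2(0,\infty)\cap L^1_{\mathrm{loc}}$ from the kinetic-energy balance \eqref{tderiv-kin}, and invoke Lemma \ref{gronwall}. The only differences are cosmetic (you differentiate $u=\Omega_1-\Omega$ directly rather than computing $-\ddot V$, and omit the factor $k$ in $y$, which only rescales the constants).
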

\begin{proof}
We start by defining the nonnegative function $y = -\dot{V} = k\norm{\Omega - \Omega_1}^2$. From \eqref{tderiv-kin}, we have that $\Omega$ and $\Omega_1$ are both uniformly bounded, and thus $y \in L^\infty(0,\infty)$. We want to get an inequality of the form:
\begin{equation*}
    \dot y(t) \leq -Cy(t) + F(t),\quad t \in [0,\infty),
\end{equation*}
where $C$ is a positive constant, and $F$ is a non-negative function in $L^q(0,\infty) \cap L^1_{loc}(0,\infty)$, for some $q\in [1,\infty)$. We directly compute the time-derivative of $y$ using \eqref{tderiv-kin} and \eqref{ODE} to obtain 
\begin{align*}
     \dot y &= - \ddot V 
     = - 2k^2\inner{\Omega - \Omega_1}{J^{-1}(\Omega - \Omega_1)} - 2k^2\inner{\Omega - \Omega_1}{I^{-1}(\Omega - \Omega_1)} 
     \\
     &\quad- 2k\inner{\Omega - \Omega_1}{J^{-1}(\Omega \times (J\Omega))}
     \\
     &\leq -2k^2\frac{I+A_3}{A_3 I}\norm{\Omega - \Omega_1}^2 + 2k\frac{A_3}{A_1}\norm{\Omega}^2\norm{\Omega - \Omega_1}
     \\
     &\le -2k^2\frac{I+A_3}{A_3 I}\norm{\Omega-\Omega_1}^2 +M\norm{\Omega - \Omega_1},
\end{align*}
for some positive constant $M=M(\Omega_0,\Omega_{10},J,k)$. In the above, we have used Cauchy-Schwarz inequality, the fact that $J$ is a diagonal tensor with eigenvalues $A_1\le A_2\le A_3$, and that $\norm{\Omega}$ is uniformly bounded. 

If we define $F(t) := M\norm{\Omega - \Omega_1}$ and $C :=\displaystyle 2k^2\frac{I+A_3}{A_3 I}> 0$, we have the desired inequality 
\begin{align*}
    \dot y(t) &\leq -Cy(t) + F(t). 
\end{align*}
It remains to show that $F$ satisfies the requirements of Lemma \ref{gronwall}. We indeed have that $F(t) \geq 0$, and since it is continuous, it is locally integrable. By the balance of kinetic energy \eqref{tderiv-kin} together with the Fundamental Theorem of Calculus, we have that 
\begin{align*}
    \int^{\infty}_{0}\norm{\Omega(\tau) - \Omega_1(\tau)}^2 d\tau &= \lim_{t \rightarrow \infty} \frac{V(0) - V(t)}{k} = \frac{V(0) - V_\infty}{k}< \infty.
\end{align*}
In the above equation, $V_\infty$ is defined as $\lim_{t \rightarrow \infty} V(t)$, which exists since the kinetic energy  $V(t)$ is a non-negative, non-increasing continuous function. Hence $F \in L^2(0,\infty)\cap L^1_{loc}(0,\infty)$. By Lemma \ref{gronwall},  we can finally conclude that $y(t)\to 0$ as $t\to\infty$, which in turn implies that 
\begin{gather*}
    \lim_{t \rightarrow \infty} (\Omega(t) - \Omega_1(t)) = 0.
\end{gather*}
\end{proof}
As a consequence of the above theorem, we then have the follow corollary. 
\begin{corollary}
\label{gronwall-coro}
Solutions $(\Omega,\Omega_1)$ of \eqref{ODE} satisfy 
\[
\lim_{t \rightarrow \infty} \dot{\Omega}(t) = \lim_{t \rightarrow \infty} \dot{\Omega}_1(t) = 0.
\]
\end{corollary}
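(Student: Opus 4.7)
The two limits are of rather different character. The limit $\lim_{t\to\infty} \dot\Omega_1(t) = 0$ follows by direct substitution into the second equation of \eqref{ODE} once the relative velocity is controlled via Theorem \ref{gronwall-app}, whereas $\lim_{t\to\infty} \dot\Omega(t) = 0$ is strictly deeper: the nonlinear gyroscopic term $\Omega \times J\Omega$ appearing in the first equation of \eqref{ODE} does not decay from the decay of the relative velocity alone. Accordingly, the plan is to treat the two limits separately, dispatching the first directly and handling the second through a Barbalat-type argument applied to $\Omega - \Omega_1$.

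For $\dot\Omega_1$, I would exploit $\Omega \times \Omega = 0$ to rewrite the second equation of \eqref{ODE} as
\[
\dot\Omega_1 = -\frac{k}{I}(\Omega_1 - \Omega) - \Omega \times (\Omega_1 - \Omega).
\]
The uniform boundedness of $\Omega$ on $[0,\infty)$, already noted as a consequence of the kinetic-energy balance \eqref{tderiv-kin} and the coercivity of $V$, together with Theorem \ref{gronwall-app}, forces both terms on the right-hand side to vanish as $t\to\infty$.

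For $\dot\Omega$, a direct substitution into the first equation of \eqref{ODE} eliminates the term $k(\Omega_1 - \Omega)$ but leaves the gyroscopic term $\Omega \times J\Omega$ whose vanishing is not implied by $\Omega - \Omega_1 \to 0$. I would instead invoke the (vector-valued, coordinate-wise) standard fact: \emph{if $f \in C^2([0,\infty);\mathbb{R}^n)$ satisfies $f(t) \to 0$ as $t \to \infty$ and $\ddot f$ is uniformly bounded, then $\dot f(t) \to 0$ as $t \to \infty$}. Applying this with $f := \Omega - \Omega_1$, the first hypothesis is exactly Theorem \ref{gronwall-app}. For the second, I would differentiate \eqref{ODE} once; the resulting expressions for $\ddot\Omega$ and $\ddot\Omega_1$ are multilinear in $\Omega, \Omega_1, \dot\Omega, \dot\Omega_1$ with bounded coefficients, and all four of these are uniformly bounded on $[0,\infty)$ (the first two from the energy balance, the latter two by reading off \eqref{ODE}). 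Hence $\ddot f$ is uniformly bounded, the lemma yields $\dot f = \dot\Omega - \dot\Omega_1 \to 0$, and combined with the previous step this gives $\dot\Omega \to 0$.

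The main obstacle is precisely the gyroscopic term $\Omega \times J\Omega$: at this point in the paper the pointwise convergence of $\Omega$ to an equilibrium has not yet been established, so its limit cannot be identified directly. The Barbalat-type step bypasses this by trading the unavailable pointwise convergence of $\Omega$ for differential-regularity control on $\Omega - \Omega_1$, which the smoothness of the vector field $F$ in \eqref{eq:F} automatically provides on any bounded trajectory.
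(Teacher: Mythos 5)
Your proposal is correct and takes essentially the same route as the paper: the direct estimate for $\dot\Omega_1$ from the second equation of \eqref{ODE} is identical, and your auxiliary fact (``$f\to 0$ with $\ddot f$ uniformly bounded implies $\dot f\to 0$'') applied to $f=\Omega-\Omega_1$ is just Barb\u{a}lat's Lemma in disguise, which is exactly how the paper concludes, there phrased as uniform continuity of $\dot\Omega-\dot\Omega_1$ together with convergence of its integral $\Omega(t)-\Omega_1(t)-(\Omega_0-\Omega_{10})$.
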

\begin{proof}
Using the second equation of \eqref{ODE} and noting $\Omega \times \Omega = 0$,  we have that 
\begin{align*}
    \dot{\Omega}_1 
   &= \frac{k}{I}(\Omega - \Omega_1) + \Omega \times (\Omega - \Omega_1). 
\end{align*}
By the triangle inequality and the properties of the cross product, we have 
\begin{align*}
    \norm{\dot{\Omega}_1} &\leq \frac{k}{I}\norm{\Omega - \Omega_1} + \norm{\Omega \times (\Omega - \Omega_1)} 
    \leq \left(\frac{k}{I} + M\right) \norm{\Omega - \Omega_1},
\end{align*}
where $M \geq 0$ is a uniform bound on $\Omega$ (depending on the norm of the initial conditions) resulting from \eqref{tderiv-kin}. We then get 
\[
\lim_{t\rightarrow\infty} \norm{\dot{\Omega}_1(t)} \leq \left(\frac{k}{I} + M\right)\lim_{t\rightarrow\infty}  \norm{\Omega(t) - \Omega_1(t)} = 0,
\]
and so we have that $\lim_{t \rightarrow \infty} \dot{\Omega}_1(t) = 0$. We recognize that the vector field $f(t):=\dot{\Omega} (t)- \dot{\Omega}_1(t)$ is a uniformly continuous function, since the right-hand sides of \eqref{ODE} are Lipschitz continuous. In addition, thanks to the previous theorem, we know that there exists 
\[
\lim_{t\to\infty}\int^t_0f(\tau)\;d\tau=
\Omega_{10}-\Omega_0.
\]
Thus, using Barb\u alat's Lemma for vector valued functions (see e.g., \cite{barb}), we get that there exists 
\[
\lim_{t \rightarrow \infty} f(t)=\lim_{t \rightarrow \infty} (\dot{\Omega}(t) - \dot{\Omega}_1(t)) = 0. 
\]
Therefore, there also exists
\[
\lim_{t\to\infty}\dot{\Omega}(t)=\lim_{t\to \infty}\dot{\Omega}_1(t)+\lim_{t\to\infty} (\dot{\Omega}(t) - \dot{\Omega}_1(t)) = 0.
\]
\end{proof}

We can now combine results from the previous section and this section to characterize the long-time behaviour of solutions to \eqref{ODE}.

\begin{theorem}\label{th:long-time}
All solutions to \eqref{ODE} must eventually converge to a permanent rotation about one of the principal axes of inertia of $J$ with an exponential rate. 

More precisely, for every $(\Omega_0,\Omega_{10})\in \mathbb{R}^3\times\mathbb{R}^3$, there exist $t^*\ge0$ and $(\Omega^*,\Omega_1^*)\in \mathcal E$ (see Theorem \ref{equilibria}) such that the (unique) solution $(\Omega (t),\Omega_1(t))$ of \eqref{ODE}, corresponding to the initial conditions $(\Omega_0,\Omega_{10})$, exists for all $t\ge 0$, and satisfies the estimate
\[
\norm{\Omega(t)-\Omega^*}+\norm{\Omega_1(t)-\Omega^*}\le Ce^{-ct}\qquad\text{for all }t\ge t^*,
\]
where $C$ and $c$ are positive constants. 
\end{theorem}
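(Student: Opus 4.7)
The plan is to study the $\omega$-limit set $\omega(z_0)$ of the trajectory $z(t):=(\Omega(t),\Omega_1(t))$, show it consists of equilibria of one common normal-stability type, and then upgrade the asymptotic approach to $\omega(z_0)$ into exponential convergence to a single equilibrium via Theorems \ref{normalstable} and \ref{normalhyper}.

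Setting $M:=K^2(\Omega_0,\Omega_{10})$, Proposition \ref{prop:balances}(ii) confines the trajectory to the set $A(M)$, which is compact by Lemma \ref{A-compact}. Standard arguments then yield that $\omega(z_0)$ is non-empty, compact, connected, and $d(z(t),\omega(z_0))\to 0$ as $t\to\infty$. Next, for any $\bar z=(\bar\Omega,\bar\Omega_1)\in\omega(z_0)$ with $z(t_n)\to\bar z$, Theorem \ref{gronwall-app} forces $\bar\Omega=\bar\Omega_1$, while Corollary \ref{gronwall-coro} together with continuity of $F$ in \eqref{eq:F} gives
\[
F(\bar z)=\lim_{n\to\infty}F(z(t_n))=\lim_{n\to\infty}\bigl(\dot\Omega(t_n),\dot\Omega_1(t_n)\bigr)=0,
\]
so $\bar z\in\mathcal E$ by Theorem \ref{equilibria}. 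In particular $d(z(t),\mathcal E)\to 0$.

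Because $\omega(z_0)$ is connected, it must sit inside a single connected component of $\mathcal{E}\cap A(M)$. In the generic case $M>0$, going through the four subcases of Remark \ref{equilibria-remark} and imposing $\|(J+I\mathbf{E_0})\Omega^*\|^2=M$ on equilibria, each such component is either a pair of antipodal points on a principal axis of inertia, a circle, or a two-sphere, and within any one component all equilibria share a common eigenvalue $\lambda^*\in\{A_1,A_2,A_3\}$ of $J$. Theorem \ref{pruss-theorem} then forces every equilibrium in $\omega(z_0)$ to be of one common type: all normally stable when $\lambda^*=A_3$, all normally hyperbolic otherwise. (The degenerate case $M=0$ forces $\Omega_1=-J\Omega/I$ throughout, which fed into \eqref{tderiv-kin} gives exponential decay of $V$ and hence of $z(t)$ to $0$, so it is treated by hand.)

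The main obstacle is the last step, where one must promote mere asymptotic approach to exponential convergence to a single equilibrium. Pick any $z^*\in\omega(z_0)$. If $z^*$ is normally stable, choose $t^*$ large enough that $z(t^*)$ enters the $\delta$-ball of Theorem \ref{normalstable} around $z^*$; applying that theorem from time $t^*$ directly produces a limit $z_\infty\in\mathcal E$ and the exponential estimate. The subtlety lies in the normally hyperbolic case, since Theorem \ref{normalhyper} leaves open the alternative that $d(z(t^*),\mathcal E)>\rho$ for some $t^*>0$. I would rule this out by exploiting the already-established $d(z(t),\mathcal E)\to 0$: shrink $\rho$ as required by Theorem \ref{normalhyper}, take the matching $\delta\in(0,\rho]$, and then pick $t^*$ so large that simultaneously $z(t^*)\in B_\delta(z^*)$ and $d(z(t),\mathcal E)<\rho$ for all $t\ge t^*$. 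The escape alternative is incompatible with the second condition, so the exponential-convergence alternative must hold, giving $z(t)\to z_\infty=(\Omega^*,\Omega^*)\in\mathcal E$ at an exponential rate and hence the estimate claimed in the theorem.
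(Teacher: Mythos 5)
Your proposal is correct and takes essentially the same route as the paper: show $\omega(z_0)$ is non-empty, compact, connected and contained in $\mathcal E$ via Theorem \ref{gronwall-app}, Corollary \ref{gronwall-coro} and Theorem \ref{equilibria}, use connectedness and Theorem \ref{pruss-theorem} to force a single stability type, and then invoke Theorems \ref{normalstable} and \ref{normalhyper}, with $d(z(t;z_0),\mathcal E)\to 0$ ruling out the escape alternative in the normally hyperbolic case exactly as the paper does. Your explicit description of the components of $\mathcal E\cap A(M)$ and your separate hand treatment of the degenerate case $K^2(\Omega_0,\Omega_{10})=0$ are minor refinements of the same argument, not a different approach.
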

\begin{proof}
Let $z(t;z_0) := (\Omega(t), \Omega_1(t))$ denote the (unique) solution to \eqref{ODE} corresponding to the initial condition $z_0:=(\Omega_0,\Omega_{10})\in \mathbb{R}^3\times\mathbb{R}^3$. The positive orbit 
\[
\gamma^+(z_0):=\{(z_1,z_2)\in \mathbb{R}^3\times\mathbb{R}^3:\; (z_1,z_2)=z(t;z_0)\text{ for some }t\ge 0\}
\]
is bounded by \eqref{tderiv-kin}. So, the $\omega$-limit set 
\[\begin{split}
\omega(z_0):=\{(u,v)\in\mathbb{R}^3\times\mathbb{R}^3&:\; \text{there exists }\{t_n\}_{n=1}^\infty\text{ with } t_n\nearrow+\infty\text{ as }n\to\infty, 
\\
&\text{such that }\lim_{n\to\infty}\norm{\Omega(t_n)-u}=\lim_{n\to\infty}\norm{\Omega_1(t_n)-v}=0\}
\end{split}\]
is non-empty, compact and connected. In addition (see e.g., \cite[Chapter I, Theorem 8.1]{hale}), 
\begin{equation}\label{eq:distance}
\lim_{t \rightarrow \infty} \text{d}(z(t;z_0),\omega(z_0))=0. 
\end{equation}
In view of Theorem \ref{gronwall-app}, Corollary \ref{gronwall-coro}, and Theorem \ref{equilibria}, we have that $\omega(z_0)\subset\mathcal E$. Since $\omega(z_0) $ is connected, by Theorem \ref{pruss-theorem}, we then have two possibilities: either  $\omega(z_0) $ contains only normally stable equilibria or it contains only normally hyperbolic equilibria. 

Let $z^*\in\omega(z_0)$ and assume it is a normally stable equilibrium. Then, $z^*=(\Omega^*,\Omega^*_1)$ with $\Omega^*=\Omega^*_1=\alpha^*\mathsf{a}_3$ an eigenvector of $J$ corresponding to the principal moment of inertia $\lambda^*=A_3$ (see Theorems \ref{equilibria} and \ref{pruss-theorem}, and recall that -without loss of generality- the principal moment of inertia of $\mathcal B_1$ have been ordered as $A_1\le A_2\le A_3$). There exists a sequence $\{t_n\}_{n=1}^\infty\subset(0,\infty)$, $t_n\nearrow+\infty$ as $n\to\infty$, such that 
\[
\lim_{n\to\infty}\norm{\Omega(t_n)-\Omega^*}=\lim_{n\to\infty}\norm{\Omega_1(t_n)-\Omega^*}=0. 
\]
Corresponding to $\delta$ in Theorem \ref{normalstable}, there exists $N>0$ such 
$\norm{\Omega(t_N)-\Omega^*}+\norm{\Omega_1(t_N)-\Omega^*}<\delta$. Now, we consider $z_N:=(\Omega(t_N),\Omega_1(t_N))$ as initial condition of the solution $z(t;z_N)$ of \eqref{ODE}. By uniqueness of solutions, 
\[
z(t;z_N)=z(t;z(t_N;z_0))=z(t+t_N;z_0), 
\]
and again by Theorem \ref{normalstable}, there exists $z_\infty\in \mathcal E$ such that $\norm{z(t;z_N) - z_{\infty}} \leq c_1e^{-c_2t}$ for all $t \geq 0$, that is 
\[
\norm{z(t;z_0) - z_{\infty}} \leq c_1e^{-c_2(t-t_N)}\quad \text{for all } t \geq t_N.
\]

Now, suppose that $\omega(z_0)$ contains only normally hyperbolic equilibria. Fix $\varepsilon>0$.  By \eqref{eq:distance}, there exists $t^*>0$ such that 
\[
\text{d}(z(t;z_0),\mathcal E)\le \text{d}(z(t;z_0),\omega(z_0))<\varepsilon \quad\text{ for all }t\ge t^*.
\]
Consider $z^*=(\Omega^*,\Omega^*)\in \omega(z_0)$, and a sequence $\{t_n\}_{n=1}^\infty\subset(0,\infty)$, $t_n\nearrow+\infty$ as $n\to\infty$, such that 
\[
\lim_{n\to\infty}\norm{\Omega(t_n)-\Omega^*}=\lim_{n\to\infty}\norm{\Omega_1(t_n)-\Omega^*}=0. 
\]
Take $\delta$ from the second part of Theorem \ref{normalhyper} with $\rho=\varepsilon$. Corresponding to $\delta$, there exists $N>0$ such that $\norm{\Omega(t_n)-\Omega^*}+\norm{\Omega_1(t_n)-\Omega^*}<\delta$ for all $n>N$. Now, fix $m>N$ such that $t_m\ge t^*$, and consider $z_m:=(\Omega(t_m),\Omega_1(t_m))$ as initial condition of a (unique) global solution $z(t;z_m)$ of \eqref{ODE}. Such a solution satisfies 
\[
\text{d}(z(t;z_m),\mathcal E)=\text{d}(z(t+t_m;z_0),\mathcal E)\le\text{d}(z(t+t_m;z_0),\omega(z_0))<\varepsilon \quad\text{ for all }t\ge 0.
\]
Therefore, the second part of Theorem \ref{normalhyper} implies the existence of  $z_\infty\in \mathcal E$ such that $\norm{z(t;z_m) - z_{\infty}} \leq c_1e^{-c_2t}$ for all $t \geq 0$, that is 
\[
\norm{z(t;z_0) - z_{\infty}} \leq c_1e^{-c_2(t-t_m)}\quad \text{for all } t \geq t_m.
\]
\end{proof}

\subsection{Attainability of equilibria}
\label{sect-attain}

Theorem \ref{th:long-time} is silent about the equilibrium configurations that solutions to \eqref{ODE} will eventually attain. Physically speaking, after imparting an initial angular momentum, say to $\mathcal B_1$, the whole system $\mathcal B_1\cup\mathcal B_2$ will eventually reach the equilibrium which is a permanent rotation around one of the principal axes of inertia of $\mathcal B_1$. In other words, $\mathcal B_2$ will eventually go to the rest relative to $\mathcal B_1$, and the whole system $\mathcal B_1\cup\mathcal B_2$ will be spinning (with constant angular velocity) around one of the principal axes of inertial $\mathcal B_1$.  However, it is not known a priori (depending on the initial data) around which axis this rotation will occur. In view of Theorem \ref{normalstable}, we know that stable equilibria are only those rotations around the axis of inertia corresponding to the largest moment of inertia. Objective of this section is to obtain sufficient conditions on the initial conditions that would ensure that the whole system would eventually attain a permanent rotation around the principal axis corresponding to the largest moment of inertial $A_3$ of $\mathcal B_1$. 

To start, recall the definition of $K^2$ and $V$ (i.e., the squared norm of the angular momentum and the kinetic energy, respectively), 
\[\begin{split}
V(\Omega,\Omega_1) &= \frac{1}{2}(\inner{\Omega}{J\Omega} + I\inner{\Omega_1}{\Omega_1})
\\
K^2(\Omega,\Omega_1)&=\norm{J\Omega + I\Omega_1}^2
\end{split}\]
From Proposition \ref{prop:balances}, the following balances hold along solutions to \eqref{ODE}: 
\begin{gather}
\label{tderiv}
    \frac{d}{dt}K^2 = 0, \quad \frac{d}{dt}V = -k\norm{\Omega_1(t) - \Omega(t)}^2
\end{gather}
Let $(\Omega,\Omega_1)$ be a solution to \eqref{ODE}, and define 
\[\begin{split}
\bar\Omega:=\lim_{t\to\infty}\Omega(t)=\lim_{t\to\infty}\Omega_1(t). 
\end{split}\]
We note that the above limits exist by Theorem \ref{th:long-time}. 
Integrating both equations in \eqref{tderiv} over $[0,\infty)$, we find that $\bar\Omega$ must satisfy the following two conditions 
\begin{equation}
\label{integrated}
\begin{split}
    \norm{(J+I)\bar\Omega}^2 &= K^2(0), \quad 
    \\
    \frac{1}{2}\inner{\bar\Omega}{(J+I)\bar\Omega} &= V(0) - \int_{0}^{\infty}k\norm{\Omega_1(\tau) - \Omega(\tau)}^2d\tau. 
\end{split}
\end{equation}
Let us write $\Omega(t)=p(t)\mathsf{a}_1+q(t)\mathsf{a}_2+r(t)\mathsf{a}_3$, and denote  
\[\begin{split}
\bar p:=\lim_{t\to\infty}p(t),\quad \bar q:=\lim_{t\to\infty}q(t),\quad \bar r:=\lim_{t\to\infty}r(t).
\end{split}\]
Equations \eqref{integrated} can be rewritten as follows 
\begin{multline}
\label{momlim}
 (A_1 + I)^2\bar{p}^2 + (A_2 + I)^2\bar{q}^2 + (A_3 + I)^2\bar{r}^2 \\
 = A_1^2p^2(0) + A_2^2q^2(0) + A_3^2r^2(0) + I^2\norm{\Omega_1(0)}^2 + 2\inner{J\Omega(0)}{I\Omega_1(0)},
\end{multline}
and
\begin{multline}
\label{kinmom}
 (A_1 + I)\bar{p}^2 + (A_2 + I)\bar{q}^2 + (A_3 + I)\bar{r}^2 \\
 \qquad\ = A_1p^2(0) + A_2q^2(0) + A_3r^2(0) + I\norm{\Omega_1(0)}^2 -\int_{0}^{\infty}k\norm{\Omega_1(\tau) - \Omega(\tau)}^2d\tau. 
\end{multline}
We denote the latter displayed integral as $D_{\infty}$. We are ready to present the following result about the attainability of permanent rotations about the principal axis corresponding to the largest moment of inertia of $\mathcal B_1$. 
\begin{theorem}
\label{attainability} We have the following:
\begin{enumerate}
    \item[(a)] Suppose that $A_1 = A_2 < A_3$. If 
            \begin{gather*}
            (A_3 - I)(A_3 - A_1)r^2(0) + 2\inner{J\Omega(0)}{I\Omega_1(0)} > A_1I(\norm{\Omega(0)}^2 + \norm{\Omega_1(0)}^2),
        \end{gather*}
        then $$\lim_{t \rightarrow \infty} p(t) = \lim_{t \rightarrow \infty} q(t) = 0. $$
    \item[(b)] Suppose that $A_1 < A_2 < A_3$. If the following two conditions are satisfied
        \begin{equation*}\begin{split}
            (A_2 - I)(A_2 - A_1)q^2(0) +  (A_3 - I)(A_3 - A_1)&r^2(0) + 2\inner{J\Omega(0)}{I\Omega_1(0)} 
            \\
            &> A_1I(\norm{\Omega(0)}^2 + \norm{\Omega_1(0)}^2), 
            \\
            (A_1 - I)(A_1 - A_2)p^2(0) + (A_3 - I)(A_3 - A_2)&r^2(0) + 2\inner{J\Omega(0)}{I\Omega_1(0)} 
            \\
            &> A_2I(\norm{\Omega(0)}^2 + \norm{\Omega_1(0)}^2), 
        \end{split}\end{equation*}
        then $$\lim_{t \rightarrow \infty} p(t) = \lim_{t \rightarrow \infty} q(t) = 0. $$
    \item[(c)] Suppose that $A_1 < A_2 = A_3$. If 
        \begin{equation*}\begin{split}
            (A_3 - I)(A_3 - A_1)(q^2(0) + r^2(0)) + 2&\inner{J\Omega(0)}{I\Omega_1(0)} 
            \\
            &\qquad > A_1I(\norm{\Omega(0)}^2 + \norm{\Omega_1(0)}^2),
        \end{split}\end{equation*}
        then $$\lim_{t \rightarrow \infty} p(t) = 0.$$ 
\end{enumerate}
\end{theorem}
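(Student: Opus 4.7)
My plan is to argue by contradiction using the two integrated balance laws \eqref{momlim} and \eqref{kinmom}, together with the fact, guaranteed by Theorem \ref{th:long-time} and Theorem \ref{equilibria}, that $\bar\Omega := \lim_{t\to\infty}\Omega(t) = \lim_{t\to\infty}\Omega_1(t)$ exists and must be either zero or an eigenvector of $J$. In each part, if the stated conclusion fails then $\bar\Omega$ must lie along a principal axis corresponding to a \emph{smaller} moment of inertia, and I aim to show that this structural constraint, combined with the two balance identities, contradicts the hypothesis.

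The common mechanism is an elimination trick: when $\bar\Omega$ is confined to the eigenspace of $J$ associated with a single eigenvalue $A_i$, the left-hand sides of \eqref{momlim} and \eqref{kinmom} are proportional with ratio $A_i + I$. Forming the combination $\eqref{momlim} - (A_i+I)\cdot\eqref{kinmom}$ eliminates $\bar\Omega$ altogether and leaves an identity purely in the initial data and the dissipation integral $D_\infty \ge 0$. The algebraic identity $A_j(A_j - A_i - I) = (A_j - I)(A_j - A_i) - A_i I$ is then what converts the coefficients of the remaining squared components of $\Omega(0)$ into precisely the quantities $(A_j-I)(A_j-A_i)$ appearing in the statement, while absorbing the stray $-A_i I$ terms into the combined expression $A_i I(\norm{\Omega(0)}^2 + \norm{\Omega_1(0)}^2)$ on the other side.

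Concretely, for part (a) I assume $\bar r = 0$, so $\bar\Omega \in \text{span}\{\mathsf{a}_1,\mathsf{a}_2\}$ (which holds since $A_1 = A_2$), and apply the elimination with $A_i = A_1$; after using $p^2(0)+q^2(0) = \norm{\Omega(0)}^2 - r^2(0)$ and dropping the non-positive term $-(A_1+I)D_\infty$, the resulting inequality is the exact reverse of the hypothesis, whence $\bar r \ne 0$ and therefore $\bar p = \bar q = 0$. Part (b) is the same argument performed twice: I exclude $\bar\Omega \in \text{span}\{\mathsf{a}_1\}$ via elimination with $A_i = A_1$ and $\bar\Omega \in \text{span}\{\mathsf{a}_2\}$ via elimination with $A_i = A_2$, each yielding one of the two hypothesized inequalities in contrapositive form, leaving only $\bar\Omega \in \text{span}\{\mathsf{a}_3\}$. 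In part (c), with $A_2 = A_3$, only the axis $\mathsf{a}_1$ needs to be excluded, and because the eigenspace of $A_2 = A_3$ is two-dimensional the $q(0)$ and $r(0)$ contributions that appear after elimination combine naturally into the single term $(A_3 - I)(A_3 - A_1)(q^2(0) + r^2(0))$ matching the statement. There is no deep conceptual obstacle here; the main care lies in the bookkeeping that recasts $A_j(A_j - A_i - I)$ as $(A_j-I)(A_j-A_i) - A_i I$ without losing or duplicating the $A_i I$ terms, and the crucial structural input is the nonnegativity of the dissipation integral $D_\infty$, which upgrades the equality coming from the balance laws to the strict inequality that contradicts each hypothesis.
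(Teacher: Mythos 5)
Your proposal is correct and follows essentially the same route as the paper's own proof: argue by contradiction, use the integrated balances \eqref{momlim} and \eqref{kinmom}, eliminate the limit by subtracting $(A_i+I)$ times \eqref{kinmom} from \eqref{momlim}, and invoke the nonnegativity of $D_\infty$ to contradict the stated strict inequality, with exactly the bookkeeping identity $A_j(A_j-A_i-I)=(A_j-I)(A_j-A_i)-A_iI$ producing the coefficients in the statement. No substantive differences from the paper's argument.
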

\begin{proof}
We start with statement (a). We argue by contradiction, and assume that $\bar{r} = 0$. By Theorem \ref{th:long-time}, we know that either $\bar{p} \neq 0$ or  $\bar{q} \neq 0$ (or both are nonzero). Equations \eqref{momlim} and \eqref{kinmom} then become
\begin{equation*}\begin{split}
     (A_1 + I)^2(\bar{p}^2 + \bar{q}^2) &= A_1^2p^2(0) + A_2^2q^2(0) + A_3^2r^2(0) + I^2\norm{\Omega_1(0)}^2 
     \\
     &\qquad\qquad\qquad\qquad\qquad\qquad\qquad\qquad\qquad\quad
     + 2\inner{J\Omega(0)}{I\Omega_1(0)}, 
     \\
     (A_1 + I)(\bar{p}^2 + \bar{q}^2) &= A_1p^2(0) + A_2q^2(0) + A_3r^2(0) + I\norm{\Omega_1(0)}^2 - D_{\infty}. 
\end{split}\end{equation*}

Multiplying both sides of the second equation with $(A_1 + I)$ and then subtracting the resulting equation from the first one, we obtain
\begin{equation*}
\begin{split}
    -(A_1 + I)D_{\infty} &= A_3^2r(0)^2 - A_1I(p^2(0) + q^2(0)) - (A_1 + I)A_3r^2(0) - A_1I\norm{\Omega_1(0)}^2 
    \\   &\qquad\qquad\qquad\qquad\qquad\qquad\qquad\qquad\qquad\qquad\quad
    + 2\inner{J\Omega(0)}{I\Omega_1(0)}
    \\
       &= (A_3 - I)(A_3 - A_1)r^2(0) - A_1I(\norm{\Omega(0)}^2 + \norm{\Omega_1(0)}^2) \\
       &\qquad\qquad\qquad\qquad\qquad\qquad\qquad\qquad\qquad\qquad\quad+ 2\inner{J\Omega(0)}{I\Omega_1(0)}. 
       \end{split}
\end{equation*}
Since $(A_1 + I)D_{\infty}$ must be nonnegative, we immediately find a contradiction with the hypothesis of statement (a). 

For statement (b), the two sufficient conditions are obtained in a similar manner by separately assuming $\bar{p} \neq 0$ and $\bar{q} \neq 0$, and following a similar procedure as with statement (a). Statement (c) follows as well by assuming $\bar{p} \neq 0$ and $A_2 = A_3$.
\end{proof}

In the next section, we will provide a numerical evidence that the conditions in the above theorem are only sufficient. In particular, we will show (numerically) that there exists trajectories corresponding to initial data not satisfying the condition in Theorem \ref{attainability}(a) that still converges to the principal axis corresponding to $A_3$ (see Figure \ref{fig-attain}). 

We would like to remark also that, with a similar argument to that yielding Theorem \ref{attainability}, one could also find sufficient conditions for convergence to a different principal axis. For example, a sufficient condition for convergence to a principal axis corresponding to $A_1 = A_2$ is:
\begin{multline*}
(A_1 - A_3)(A_1 - I)(\norm{\Omega(0)}^2 - r^2(0)) + 2\inner{J\Omega(0)}{I\Omega_1(0)} \\
- A_3I(\norm{\Omega(0)}^2 + \norm{\Omega_1(0)}^2)> 0. 
\end{multline*}

\section{Numerical experiments}\label{sec:numerics}
This section contains samples of numerical results obtained by solving \eqref{ODE} numerically (in Python) with different parameters and initial data. The first pair of plots, Figures \ref{fig-stable} and \ref{fig-unstable}, show how a small perturbation in the initial data can affect the long-time behaviour, and even convergence to an unstable equilibrium point. The parameters $k=I=1$, $A_1=A_2=3$ and $A_3=7$ were used together with the following initial conditions:
\begin{gather*}
    z_1 = \begin{bmatrix}1.5\\ 3\\ 0 \\ -1\\ -2\\ 0\end{bmatrix}, \quad
    z_2 = \begin{bmatrix}1.5\\ 3\\ 0 \\ -1\\ -2.01\\ 0\end{bmatrix}, \quad
\end{gather*}
respectively. In the initial condition $z_1$, $\Omega_0$ and $\Omega_{10}$ are scalar multiples of one another, and they are eigenvectors corresponding to $A_1 = A_2$, whereas the initial condition $z_2$ is the same as the first one except for subtracting $0.01$ from the fifth coordinate. The trajectories corresponding to each condition are shown below in Figure \ref{fig-stable} and Figure \ref{fig-unstable}, respectively. 
\begin{figure}[H]
\centering
\includegraphics[width=0.75\textwidth]{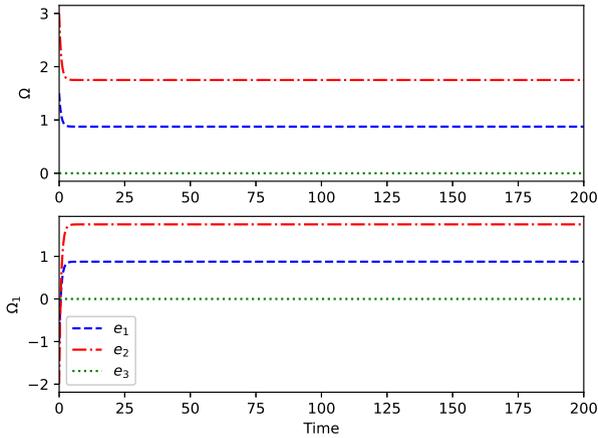}
\caption{Trajectory converges to a permanent rotation about the principal axis corresponding to $A_1 = A_2$.}
\label{fig-stable}
\end{figure}

\begin{figure}[H]
\centering
\includegraphics[width=0.75\textwidth]{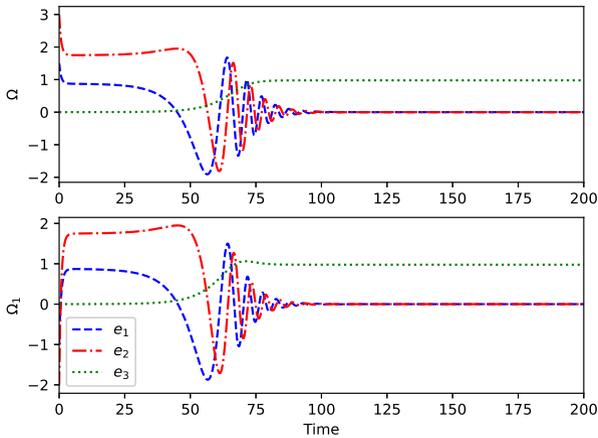}
\caption{Trajectory converges to a permanent rotation about the principal axis corresponding to $A_3$.}
\label{fig-unstable}
\end{figure}

Despite having ``close'' initial conditions, the trajectories from $z_1$ and $z_2$ converge to permanent rotations about different principal axes. 

Figure \ref{fig-attain} uses the same parameters with the initial condition $z = (1,0,0,0,1,0)^T$. One can verify that this initial data does not satisfy the conditions of Theorem \ref{attainability}(a); however, we can see that the solution still converges to the principal axis corresponding to the largest principal moment. 
\begin{figure}[H]
\centering
\includegraphics[width=0.75\textwidth]{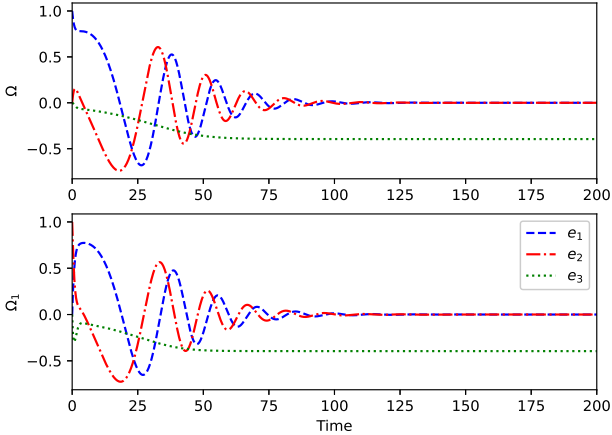}
\caption{Initial data does not satisfy Theorem \ref{attainability}(a), yet still converges to $\bar r \neq 0$. }
\label{fig-attain}
\end{figure}

\bibliography{Arsenault_Mazzone-Rigid_body_with_damper}

\end{document}